\newcommand{\bN}{\mathbb{N}}
\newcommand{\cA}{\mathcal{A}}
\newcommand{\cC}{\mathcal{C}}
\newcommand{\cG}{\mathcal{G}}
\newcommand{\cT}{\mathcal{T}}
\newcommand{\cQ}{\mathcal{Q}}
\newcommand{\cP}{\mathcal{P}}
\newcommand{\cL}{\mathcal{L}}
\newcommand{\cI}{\mathcal{I}}
\newtheorem{theorem}{Theorem}[section]
\newtheorem{proposition}[theorem]{Proposition}
\newtheorem{lemma}[theorem]{Lemma}
\newtheorem{problem}[theorem]{Problem}
\theoremstyle{definition}
\newtheorem{example}[theorem]{Example}
\theoremstyle{remark}
\newtheorem*{Remark}{Remark}
\newenvironment{Proof}{\noindent {\bf Proof.} \ }{\hfill $\Box$ \bigskip}
\begin{document}

\title{Combinatorial configurations, quasiline arrangements,
      and systems of curves on surfaces}

\author{
  J\"urgen Bokowski,
  Jurij Kovi\v{c},   %\thanks{\texttt{Dragan.Marusic@uni-lj.si}},
  Toma\v{z} Pisanski, and  %\thanks{\texttt{Tomaz.Pisanski@fmf.uni-lj.si}},
  Arjana \v{Z}itnik        %\thanks{\texttt{Arjana.Zitnik@fmf.uni-lj.si}},
%  \\
%  University of Ljubljana, Slovenia
}
\date{October 7,  2014}

\maketitle

\begin{abstract}
%For all general abstract point-line configurations in the sense of 
%B. Grünbaum [17] and T. Pisanski and B. Servatius [21], 
It is well known that not every combinatorial configuration admits a geometric
realization with points and lines. Moreover, some of them do not  even admit
realizations  with pseudoline arrangements, i.e., they are not topological.
In this paper we provide a new topological representation by using and 
essentially generalizing the topological representation 
of oriented matroids in rank 3. These representations can also be 
interpreted as curve arrangements on surfaces.

In particular, we generalize the notion of a pseudoline arrangement
to the notion of a quasiline arrangement by relaxing 
the condition that two pseudolines meet exactly once and show 
that every combinatorial configuration  can be realized as a 
quasiline arrangement in the real projective plane. 
We  also generalize well-known tools from pseudoline arrangements
such as sweeps or wiring diagrams.
A quasiline arrangement with selected vertices belonging to the configuration
can be viewed as a map on a closed surface. 
Such a map can be used to distinguish between two ``distinct" realizations 
of a combinatorial configuration as a quasiline arrangement.
\end{abstract}

\medskip
\noindent
{\sc Keywords}: pseudoline arrangement, quasiline arrangement, projective plane,
incidence structure, 
combinatorial configuration, topological configuration, geometric configuration,
sweep, wiring diagram, allowable sequence of permutations, maps on a surface.
\medskip

\noindent
{\sc Math. Subj. Class. (2010)}: 
51E20,   %	Combinatorial structures in finite projective spaces
52C30,   %	Planar arrangements of lines and pseudolines
05B30.   %	Other designs, configurations
%51E30   %	Other finite incidence structures
%52C40    %	Oriented matroids

%--------------------------------------------------------------------------------
\section{Introduction} %and motivation}

For a long time researchers in configurations have neglected the difference 
between combinatorial and geometric configurations of points and lines. 
In the nineteenth century it was either tacitly assumed that these concepts 
coincide or the emphasis was on geometric existence of configurations
and the two well-known combinatorial configurations $(7_3)$ and $(8_3)$ 
were automatically excluded from consideration. 
On the other hand, graph-theoretical approach advocated by a series of papers 
by H. Gropp \cite{Gropp1,Gropp2} identified the concept of configuration
with a combinatorial configuration and the problem of existence of the corresponding 
geometric configuration reduced  to the problem of drawing the configuration.  

Two kinds of questions can be considered in this context:
\emph{is a given combinatorial configuration geometric?} and 
\emph{for which $n,k$ do there exist geometric configurations $(n_k)$?}
It was realized quite early by H. Schr\"oter \cite{Schroter2} that among the ten 
$(10_3)$ configurations, exactly one, that we call 
\emph{anti-Desargues configuration}, is not geometric. 
However, a geometric configuration $(v_3)$ exists if and only if 
$v \geq 9$; see for example  \cite[Theorem 2.1.3]{Grunbaum}.
B. Gr\"{u}nbaum also considered the problem of determining values of 
$v$ for which geometric $(v_4)$ configurations exist,
an old problem  first posed by T. Reye in 1882 \cite{Reye}. 
He proved existence for all but a finite number of values $v$ \cite{Grunbaum1}. 
J. Bokowski and L. Schewe \cite{BokowskiSchewe} resolved four previously unknown
cases, and J. Bokowski and V. Pilaud 
\cite{BokowskiPilaud,BokowskiPilaud1,BokowskiPilaud2} resolved  
another three open cases %19, 37, 43
leaving only $v=22,23$, and $26$ open.

F. Levi \cite{Levi} introduced the notion of pseudolines and showed that the
combinatorial configurations $(7_3)$ and $(8_3)$ cannod be realized
with pseudoline arrangements  in the projective plane. 
B. Gr\"{u}nbaum was the first to observe a fundamental difference 
between the $(7_3)$ and $(8_3)$ configurations and the anti-Desargues 
configuration. Namely, the latter  can be realized as a pseudoline arrangement 
in the projective plane. The notion of \emph{topological configuration} was born. 
J. Bokowski and his coworkers were able to apply modern methods of
computational synthetic geometry to study the existence or nonexistence
of topological configurations \cite{BokowskiBook}. 
B. Gr\"{u}nbaum, J. Bokowski, and L. Schewe \cite{BokowskiGrunbaum} investigated 
the problem of existence of topological $(v_4)$ configurations and showed that 
topological configurations exist if and only if $v \geq 17$.
Recently J. Bokowski and R. Strausz \cite{BokowskiStrausz} associated to each 
topological configuration a map on a surface that they call a \emph{manifold 
associated to the topological configuration} and thus enabled the definition 
of equivalence of two topological configurations.
Namely, topological configurations are distinct in a well-defined sense %(up to applying mutations
%which are local transformations of the pseudoline arrangement,
%where only one pseudoline moves, sweeping a single vertex of
%the remaining arrangement)
if and only if the associated maps are distinct.

In this paper we first generalize the notion of a pseudoline arrangement 
to the notion of a \emph{quasiline arrangement} by relaxing 
the condition that two pseudolines meet exactly once, and  we define
a subclass of quasiline arrangements that we call \emph{monotone} (Section 4).
We  also generalize well-known tools from pseudoline arrangements
such as sweeps, wiring diagrams, and allowable sequences of permutations. 
It is known that every pseudoline arrangement can be represented by a wiring diagram and
conversely, every wiring diagram can be viewed as a pseudoline arrangement;
see  J.~E. Goodman \cite{handbook}. We show that  every monotone
quasiline arrangement can be represented by  a  generalized wiring diagram that
is in turn also a monotone quasiline arrangement (Sections 5 and 6).
In this respect the class of monotone quasiline arrangements is in some sense the weakest
generalization of the class of pseudoline arrangements.

In Section 7 we introduce a generalization of topological configurations that we call 
\emph{(monotone) quasi-topological configurations} by allowing the set of lines to form a 
(monotone) quasiline arrangement instead of a pseudoline arrangement.  
In Section 8 we show that every combinatorial incidence structure,
in particular every combinatorial configuration, can be
realized as  a monotone quasi-topological incidence structure. Moreover, we show that any
monotone quasi-topological configuration such that the underlying quasiline arrangement
has no digons, is topologically equivalent to a polygonal monotone quasi-topological 
configuration with no bends (arcs connecting two vertices of the arrangement are all straight lines).

Finally, the concept of a map associated with a topological configuration 
can be extended to the quasi-topological case (Section 9).  
Last but not least the theory we develop is applicable 
to general incidence structures and it is not limited to configurations.
\section{Basic definitions}
\label{section:definitions}

In this section we review basic definitions and facts
about incidence structures; see B. Gr\"unbaum \cite{Grunbaum}
or T. Pisanski and B. Servatius \cite{PisanskiServatius} for 
more background information.
An {\em incidence structure} $\cC$ is a triple $\cC = (\cP,\cL,\cI)$, where $\cP$
and $\cL$ are non-empty disjoint finite sets and  $\cI \subseteq \cP \times \cL$.
The elements of $\cP$  are called {\em points} and
the elements of $\cL$ are called  {\em lines}.
The relation $\cI$ is called {\em incidence relation};
if $(p,L) \in \cI$,  we say that the point $p$ is incident to the line $L$
or, in a geometrical language, that $p$ lies on $L$.
We further require that each point lies on at least two lines and
each line contains at least two points.
To stress the fact that these objects are of purely combinatorial
nature we sometimes call them \emph{abstract} or \emph{combinatorial}
incidence structures.

%The \emph{valence} of a point is the number of lines incident to it.
Two incidece structures  $\cC = (\cP,\cL,\cI)$ and 
${\cC}^\prime= ({\cP}^\prime,{\cL}^\prime,{\cI}^\prime)$ are \emph{isomorphic}, 
if there exists an incidence preserving bijective mapping 
from $\cP \cup \cL$ to ${\cP}^\prime  \cup {\cL}^\prime $
which maps  $\cP$ to ${\cP}^\prime$ and $\cL$ to ${\cL}^\prime$.

%An {\em automorphism}
%of an incidence structure  $\cC$ is an incidence preserving permutation
%on the (disjoint) union $\cP \cup \cL$ which maps  $\cP$ to $\cP$.
%%We let $\Auto \C$ denote the group of all
%%automorphisms of $\C$.

Complete information about the incidence structure can be recovered
also from its Levi graph with a given black and white coloring of the vertices.
The {\em Levi graph} $G(\cC)$ of an incidence structure $\cC$ is a bipartite graph
with ``black'' vertices $\cP$ and ``white'' vertices ${\cL}$ and with an edge
joining some $p \in \cP$ and some $L \in \cL$
if and only if  $p$ lies on $L$ in $\cC$.
%%
%The incidence structure is {\em connected} if its Levi graph is connected.

An incidence structure is {\em lineal} if any two distinct
points are incident with at most one common line.
This is equivalent to saying that the Levi graph of a lineal incidence
structure has girth at least $6$.
A {\em$(v_r,b_k)$-configuration} is a lineal incidence structure
$\cC = (\cP, \cL, \cI)$ with $\vert \cP\vert  = v$ and   $\vert \cL\vert = b$
such that each line is incident with the same number $k$
of points and each point is incident with the same number $r$ of lines.
In the special case when $v = b$ (and by a simple counting argument also
$r = k$) we speak of a {\em balanced} configuration and shorten
the notation $(v_k,v_k)$ to $(v_k)$.

A set of lines in the real Euclidean or projective plane 
together with a subset of intersection points of these lines
such that  each line contains at least two intersection points
is called a \emph{geometric incidence structure}.
From the definition it follows that each point lies on at least two lines.
A geometric incidence structure together with the incidences of points
and lines  defines a combinatorial incidence structure,
which we call the \emph{underlying combinatorial incidence structure}.
Such a  combinatorial incidence structure is certainly lineal.
Two geometric incidence structures are \emph{isomorphic}
if their underlying combinatorial incidence structures are isomorphic.
A geometric incidence structure  $\cG$ is a \emph{realization} of
a combinatorial incidence structure $\cC$ if the underlying
combinatorial incidence structure of $\cG$ is isomorphic to $\cC$.

%--------------------------------------------------------------------------------
%--------------------------------------------------------------------------------
%--------------------------------------------------------------------------------
\section{Pseudolines and topological incidence structures}
\label{sec:topological}

In this section we review  basic facts about pseudoline arrangements.
By a projective plane we mean the real projective plane or the
extended Euclidean plane.
A \emph{pseudoline} is a simple non-contractible closed curve
in the projective plane. In particular, each line in the projective plane
is a pseudoline. 

Pseudolines and certain relationships between them inherit
properties from the topological structure of the projective plane \cite{handbook}. 
For instance:\\

\noindent
\textbf{Fact I.} Any two pseudolines have at least one point in common. \\

\noindent
\textbf{Fact II.} If two pseudolines meet in exactly one point
they intersect transversally at that point. \\

A \emph{pseudoline arrangement} $\cA$ is a collection of at least two pseudolines
in the projective plane with the property that each pair of pseudolines of $\cA$
has exactly  one point in common (at which they cross transversally).
Such a point of intersection is called a \emph{vertex} or a 
\emph{crossing} of the arrangement.
A crossing in which only two pseudolines meet is called \emph{regular}.
If more than two pseudolines meet in the same point, the crossing is called
\emph{singular}. 
%define the cells of an arrangement to be the connected components of the points not belonging to any line
Each pseudoline arrangement $\cA$ determines an associated   2-dimensional
cell complex into which the pseudolines of  $\cA$ decompose the projective plane.
Its cells of dimension 0,1,2 are called \emph{vertices}, \emph{edges},
and \emph{cells} (or \emph{polygons}), respectively; 
see Gr\"unbaum \cite[p. 40]{Grunbaum72}.
Two pseudoline arrangements are \emph{isomorphic} if the associated cell
complexes are isomorphic; that is, if and only if there exists an
incidence preserving bijective mapping between the vertices, edges, and
cells of one arrangement and those of the other.
%if there is an homeomorphism of their underlying projective planes 
%that sends one arrangement onto the other.

We say that a pseudoline is \emph{polygonal}, if it is a line or
it can be subdivided into a finite number of closed line segments
(whereby the endpoints of the line segments occur of course twice).
%A pseudoline is \emph{polygonal}, if it is determined by a sequence of points,
%called \emph{nodes} and line segments between any two nodes.
A pseudoline arrangement is \emph{polygonal} if every pseudoline 
of the arrangement is polygonal. Note that a line arrangement is polygonal. 
A point on a polygonal pseudoline that is not a crossing of the arrangement
is called a \emph{bend} if two line segments meet at that point
and the join of these segments is not again a line segment.
A crossing $v$ of a polygonal pseudoline arrangement is \emph{straight}, 
if there exists a neighborhood $N(v)$ of $v$ such that the intersection of $N(v)$
with every   pseudoline $\ell$ containing $v$ is a line segment.

%Since we may introduce nodes by refinement at will,
%The crossings may be assumed either to be straight (parts of line segments
%of each pseudoline with that crossing) or nodal if they are nodes for each
%involved pseudoline. A node of a pseudoline that is not a node of a configuration
%is called a \emph{bend}.

To describe pseudoline arrangements combinatorially,
wiring diagrams   are standard tools to use; see J.~E. Goodman \cite{handbook}
and J.~E. Goodman and R. Pollack \cite{GoodmanPollack}. 

\emph{A partial wiring diagram} is a collection of $x$-monotone polygonal lines
in the  Euclidean plane, each of them horizontal except for a finite number of short
segments, where it crosses another polygonal line. 
\emph{A wiring diagram} is a partial wiring diagram 
% which is also a pseudoline arrangement, i.e.,
with the property  that every two polygonal lines cross exactly once.

A wiring diagram can be viewed as a pseudoline arrangement: take a disk that
is large enough that all the crossings are in its interior, and positioned
such that the intersections of each polygonal line with the boundary of the disk
are on opposite sides of the boundary of the disk. The disk can now be
viewed as a disk model of the projective plane and the lines of
the wiring diagram form  a  polygonal pseudoline arrangement.

Conversely,  every pseudoline arrangement
can be described with a  wiring diagram; see \cite{handbook}.

\begin{theorem}                              \label{cor:pseudowiring}
Every pseudoline arrangement is isomorphic 
to a  wiring diagram.
\end{theorem}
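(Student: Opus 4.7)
The plan is to construct a wiring diagram $W$ with the same cell complex as the given pseudoline arrangement $\cA = \{\ell_1,\dots,\ell_n\}$ by choosing an auxiliary ``line at infinity,'' unrolling the projective plane into a disk, and then sweeping the disk with a monotone family of curves. Since a wiring diagram is encoded up to isomorphism by the sequence of permutations recorded along the sweep, the diagram we construct will automatically be isomorphic to $\cA$.

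First I would pick an auxiliary pseudoline $\ell_\infty$ in general position with respect to $\cA$: one that crosses every $\ell_i$ transversally in a single point and avoids all vertices of $\cA$. This exists because the vertex set of $\cA$ is finite. Since $\mathbb{RP}^2\setminus \ell_\infty$ is an open disk, I would regard $\cA$ as sitting inside a closed disk $D$ with $\partial D$ identified with $\ell_\infty$ via the antipodal map, and each $\ell_i$ becomes a simple arc in $D$ whose two endpoints on $\partial D$ are identified to a single point on $\ell_\infty$. Next I would apply a homeomorphism of $D$ onto a rectangle $[0,1]\times [0,1]$, arranging the endpoints so that they lie on the left and right edges and so that each arc is $x$-monotone. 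Sweeping by the vertical lines $\gamma_t=\{t\}\times [0,1]$, $t\in[0,1]$, then produces a linear order of the arcs for generic $t$; this order changes only when $\gamma_t$ passes through a vertex of $\cA$, by a transposition of two adjacent wires at a regular crossing and by a reversal of a consecutive block at a singular crossing. Inserting short oblique segments inside a small neighborhood of each vertex to realize the corresponding permutation change, and keeping the rest of every arc horizontal, yields a wiring diagram $W$. By construction the vertices, edges, and faces of $W$ correspond bijectively and incidence-preservingly to those of $\cA$, so $W$ is isomorphic to $\cA$.

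The main obstacle is the straightening step: making all the arcs in $D$ simultaneously $x$-monotone by a homeomorphism of the disk. One arc is easy to straighten, but doing all $n$ arcs at once requires the combinatorial constraint that any two of them cross exactly once in the interior of $D$. I would argue inductively: assuming $\ell_1,\dots,\ell_k$ have already been made $x$-monotone, the next arc $\ell_{k+1}$ meets each of them transversally in exactly one point, which allows one to apply a further homeomorphism of $D$ fixing $\ell_1,\dots,\ell_k$ and straightening $\ell_{k+1}$. Alternatively, one can invoke the classical topological sweep of Edelsbrunner and Guibas, which provides a pseudoline sweeping the arrangement vertex by vertex without any prior straightening. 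Singular crossings need only a short extra argument: perturb each singular crossing into a cluster of nearby regular ones, apply the above to obtain a wiring diagram, and then merge each cluster back into a single vertex in $W$, which is harmless since the corresponding block of transpositions multiplies to the reversal of a consecutive block.
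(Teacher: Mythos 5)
Your overall strategy is sound and is, at its core, the same sweep idea the paper uses: choose a line at infinity, sweep the resulting disk, record the sequence of permutations of the pseudolines, and read off a wiring diagram realizing that allowable sequence. Note, however, that the paper does not actually prove Theorem~\ref{cor:pseudowiring} itself --- it cites Goodman's handbook chapter --- and instead proves the generalization to monotone quasiline arrangements (Theorem~\ref{cor:monotonewiring}). There the sweep is obtained quite differently from your straightening argument: one forms the directed graph on the crossings induced by a monotone orientation, proves it is acyclic (Lemma~\ref{lem:acyclic}), and then builds the sweep curves one vertex at a time following a topological sort, pushing an $\epsilon$-tube past a triangle containing the next vertex. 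That construction never needs to make the pseudolines themselves $x$-monotone, which is exactly the step you identify as the main obstacle; your alternative of invoking the Edelsbrunner--Guibas topological sweep is the closer analogue of what the paper does. Your handling of singular crossings by perturb-and-remerge is an unnecessary detour here, since the paper's wiring diagrams (via moves that reverse a consecutive block) already accommodate multiple wires crossing at one point, but it is not wrong provided, as you say, the perturbed transpositions are kept contiguous.

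Two points deserve more care. First, your auxiliary pseudoline $\ell_\infty$ must cross every $\ell_i$ \emph{exactly once}; finiteness of the vertex set only lets you avoid the vertices, and a generic noncontractible simple closed curve can cross a given pseudoline many times. The standard fix is to take $\ell_\infty$ as a small push-off of one of the pseudolines of $\cA$ (or to invoke Levi's enlargement lemma), so that $\cA \cup \{\ell_\infty\}$ is again a pseudoline arrangement. Second, the inductive straightening step (``apply a further homeomorphism of $D$ fixing $\ell_1,\dots,\ell_k$ and straightening $\ell_{k+1}$'') is where essentially all of the content of the theorem lives, and as written it is an assertion rather than an argument: one must check that an arc meeting $k$ already-monotone arcs once each can be isotoped to an $x$-monotone position within the cell decomposition they induce, without disturbing them. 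This can be done (it is the Goodman--Pollack argument), but if you rely on it you should either carry it out or cite it; otherwise lean on the topological sweep, which sidesteps it entirely.
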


A pseudoline arrangement can also be viewed as an incidence structure 
when we define   a  subset of its crossings as its point set.
We say that an incidence structure is \emph{topological} 
if its points are points in the projective plane, 
and lines are pseudolines that form a pseudoline arrangement.
%A configuration of abstract points and abstract lines is topological,
%if the abstract lines form a pseudoline arrangement.
A topological incidence structure  $\cT$ is a \emph{topological realization} of
a combinatorial incidence structure $\cC$ if the underlying
combinatorial incidence structure of $\cT$ is isomorphic to $\cC$.
A topological incidence structure is \emph{polygonal} if its  lines are 
polygonal pseudolines.
Note that any geometric incidence structure is also (polygonal) topological.

There are three distinct notions of equivalence of topological incidence structures.
The weakest is combinatorial equivalence.
Two topological incidence structures  are \emph{combinatorially equivalent}
or \emph{isomorphic} if they are isomorphic as combinatorial incidence structures.
The strongest one is the  notion of topological equivalence between 
pseudoline arrangements in the projective plane. Two topological 
incidence structures are \emph{topologically equivalent} if 
%there exists an homeomorphism of their underlying projective planes 
%that sends one arrangement onto the other.
there exists an isomorphism of the underlying pseudoline arrangements 
that induces an isomorphism of the underlying combinatorial incidence structures.

One intermediate notion is mutation equivalence. 
A \emph{mutation} or a \emph{Reidemeister move} in a pseudoline arrangement 
is a local transformation of the arrangement where only one pseudoline $\ell$ 
moves across a single crossing $v$ of the remaining arrangement.
%, where $v$ does not lie on $\ell$. 
Only the position of the crossings of $\ell$ with the pseudolines
incident to $v$ is changed. If those crossings are not points of the incidence structure,
%If this single crossing is not a point of the incidence structure, 
we say that such a mutation is \emph{admissible}.
Two topological incidence structures are \emph{mutation equivalent}
if they can be modified by (possibly empty) sequences of admissible mutations 
to obtain topologically equivalent topological incidence structures.

It is well-known that every topological configuration can be realized 
as a polygonal pseudoline arangement \cite{Grunbaum}.
In \cite[Theorem 3.3]{Grunbaum72} essentialy the following theorem
was presented and a proof by induction was proposed.
%For the sake of completeness  we present a proof  %of somewhat stronger result.
%of this result.

\begin{proposition}               \label{prop:nobends}
Any  topological incidence structure is topologically equivalent
%to a polygonal pseudoline arangement with no bends.
to a polygonal topological incidence structure with no bends.
\end{proposition}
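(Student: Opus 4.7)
My plan is to prove the proposition in two stages. First, apply Theorem~\ref{cor:pseudowiring} to realize the pseudoline arrangement underlying $\cT$ as a wiring diagram; transporting the marked points along this isomorphism gives a polygonal topological incidence structure $\cT'$ topologically equivalent to $\cT$. It then suffices to modify $\cT'$ so that it has no bends without changing its combinatorial type.

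The second stage is the core of the argument. For each pseudoline $\ell$ and each pair of consecutive crossings $v_1, v_2$ along $\ell$, I replace the piecewise linear $\ell$-path from $v_1$ to $v_2$ by the single straight Euclidean segment $\overline{v_1 v_2}$. After doing this for every pseudoline and every consecutive pair, each pseudoline becomes a polygonal closed curve whose only direction changes occur at crossings, and hence has no bends in the sense of the definition.

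The main step I expect to do carefully is verifying that this simultaneous replacement does not alter the combinatorial type of the arrangement. Normalizing so that the wires are horizontal at integer heights and the crossings sit at half-integer heights, the wire $\ell$ between its consecutive crossings $v_1, v_2$ lies at some integer height $h$ while $v_1, v_2$ sit at adjacent half-integer heights, so $\overline{v_1 v_2}$ is contained in the horizontal strip $\{|y-h| \leq 1/2\}$. Any other wire that entered this strip over the $x$-range between $v_1$ and $v_2$ would have to reach height $h$ and hence cross $\ell$ there, contradicting the consecutiveness of $v_1, v_2$; the same reasoning shows that no crossing of the diagram lies on $\overline{v_1 v_2}$ apart from its endpoints. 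Consequently the new arrangement has the same cell complex as $\cT'$, and the straight-line homotopies interpolating between old and new segments jointly define an ambient isotopy of the projective plane, giving the required topological equivalence.

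The technical subtleties I expect to handle are the segment of each wire that wraps around the boundary of the disk model of the projective plane via antipodal identification, and the transversality of the straightened segments at singular crossings. The former can be dealt with by arranging, within the wiring diagram, for each wire to exit and re-enter at antipodal boundary points along straight chords so that the same horizontal-strip argument applies to the outer region; the latter is generic, and if a pair of straight segments emanating from a common crossing happened to be collinear it would force two distinct pseudolines to share a further crossing, violating the arrangement axiom, so an arbitrarily small perturbation of crossing positions suffices.
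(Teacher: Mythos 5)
First, a point of reference: the paper does not actually prove Proposition~\ref{prop:nobends}; it is quoted from Gr\"unbaum's \emph{Arrangements and spreads} (Theorem~3.3 there, with a proposed induction on the number of pseudolines). The closest argument the paper itself carries out is the proof of the Section~8 theorem on monotone quasi-topological structures without digons, which proceeds quite differently from your plan: it forms the plane graph of crossings, proves it simple and $2$-connected, redraws it with straight edges and a \emph{convex} outer face via Chiba--Onoguchi--Nishizeki, and only then closes the curves up across the line at infinity. Your wiring-diagram-plus-straightening route is legitimate in spirit, but as written it has two genuine gaps.

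The first is the strip argument itself. Your claim that the two consecutive crossings $v_1,v_2$ of a wire at level $h$ ``sit at adjacent half-integer heights'' is only true for regular crossings. At a singular crossing where $k\ge 3$ wires reverse a block of levels $[a,b]$, the crossing point sits at height $(a+b)/2$, which can differ from the exiting level $h$ by up to $(k-1)/2$. The chord $\overline{v_1v_2}$ then leaves the strip $|y-h|\le 1/2$ and can sweep through the strips of neighbouring wires; one can arrange a long shallow descent of one straightened segment against a short steep ascent of another (both starting near the same singular crossing) so that the two chords intersect even though the corresponding wires do not cross on that interval. So the simultaneous straightening can create spurious crossings and the cell complex is not automatically preserved; the argument needs an extra normalization (or a local treatment of singular crossings) before it goes through. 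The second, and more serious, gap is the closure at infinity. To avoid a bend at the point at infinity, the two rays of a wire running to antipodal boundary points must lie on a single Euclidean line --- necessarily the line through that wire's first and last crossings --- and you must then show that the \emph{external} parts of these $n$ lines create no new intersections with each other or with the straightened interior. Your assertion that ``the same horizontal-strip argument applies to the outer region'' cannot be right: in the outer region wire $i$ travels from height $\approx i$ on one side to height $\approx n+1-i$ on the other, so these chords are not confined to disjoint horizontal strips, and nothing in a raw wiring diagram controls where their pairwise intersection points fall. This is precisely the difficulty the paper's Section~8 proof resolves by forcing the boundary crossings into convex position, so that any two such chords meet only inside the (erased) convex polygon or at a shared vertex; some substitute for that convexity step is the missing idea in your proposal.
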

%  Proof moved to the theorem for quasi-topological incidence structures
%\begin{Proof}
%Given a pseudoline arrangement and root pseudoline, we may construct
%a planar map in the following way. Double the root pseudoline and draw
%it on a circle bounding a disk. By identifying pairwise antipodal points
%on the boundary we get the well-known disk model of the projective plane.
%The projective planar map can now be represented faithfully in the disk model.
%By Fary's theorem \cite{Fary} the resulting planar map can be drawn with straight lines.
%By extending the disk radius to infinity we obtain a pseudoline arrangement
%in the extended Euclidean plane in which each pseudoline is represented as a polygonal curve.
%%This is true for all pseudolines, except for the pseudoline (actually line!) at infinity.
%%By a suitable projective transformation we my map even the line at infinity to a finite line.
%\end{Proof}

%--------------------------------------------------------------------------------
\section{Quasiline arrangements}
\label{section:quasiline}

In this section we generalize the notion of a pseudoline arrangement
in which we relax the condition on pseudoline crossings.
A \emph{quasiline arrangement} is a collection of at least two pseudolines
in the real projective plane with the property that any two pseudolines
have a finite number of points in common and that at each common point
they cross transversally. Note that any pair of pseudolines in a 
quasiline arrangement meets an odd number of times.
The terms such as crossings, bends, polygonal quasiline arrangements,
and isomorphic quasiline arrangements are defined in the same way as 
for  pseudoline arrangements.

To simplify the discussion we will consider only quasiline arrangements 
in the extended Euclidean plane 
with the following additional properties:
\begin{itemize}
\item none of the crossings  of the arrangement are points at infinity and
\item every pseudoline of the arrangement intersects the line at
      infinity exactly once.
\end{itemize}

The former condition can  easily be achieved in general by a suitable
projective transformation while the latter is an essential assumption.
Namely,  a pseudoline can intersect the line at infinity more than
once. %; see Figure \ref{fig:crossinginfty}.

%\begin{figure}[htb]
%\centering
%  \includegraphics[width=0.3\textwidth]{crossinginfty.png}
%\caption{A polygonal pseudoline in the extended Euclidean plane that 
%         intersects the line at infinity three times.}
%\label{fig:crossinginfty}
%\end{figure}

In the sequel we define a subclass of  quasiline arrangements,
called monotone quasiline arrangements, that is in
some sense the least generalization of the pseudoline arrangements.

It is well-known that the extended Euclidean plane is in its topological
sense equivalent to the disk model of the projective plane:
the projective plane is represented by a disk, 
with all the pairs of antipodal points on the boundary identified. 
The boundary of the disk is the line at infinity
of the projective plane and  the points on the line at infinity
are points at infinity.
For the rest of the section we will use the disk model of the projective plane.

%\noindent
%\textbf{Fact III.} Any pair of pseudolines meets an odd number of times.\\
%
%If $k = 2r-1$ then $r$ will be called the \emph{crossing rank}.
%If $r = 1$ there is only one crossing involved and we have a pair of pseudolines.

%
%Since there are only a finite number of vertices in a  quasiline arrangement,
%we may assume that none of the crossings  are points at infinity and 
%that every pseudoline of the arrangement intersects  the line at infinity 
%finitely many times;  otherwise we replace it by an isomorphic arrangement.

Let  $\cA$ be  a  quasiline arrangement.
We choose  an orientation for the line at infinity $\ell$ and 
a point $x$ on  $\ell$ that is not on any of the pseudolines of the arrangement. 
Denote by $x^+$, $x^-$ the corresponding points on the boundary of the disk 
representing the projective plane without identifying $x^+$ and $x^-$. 
These two points divide the boundary of the disk into two arcs, 
$\ell^+$ from $x^+$ to $x^-$ and $\ell^-$ from $x^-$ to $x^+$,
again by forgetting the identification of antipodal points.
Starting at $x^+$ and moving along $\ell$ in the positive direction 
%(i.e., the interior of the disk representing $\ell$ is on the left)
we orient a pseudoline that we meet for the first time %in the positive direction (
such that it points to the interior of the circle. We call such an
orientation  a \emph{monotone orientation} of the quasiline arrangement $\cA$
and call the arrangement together with the point $x^+$
(or equivalently, with  a monotone orientation) a \emph{marked arrangement}. 
In a marked arrangement we have a natural ordering of the pseudolines
as the order of intersections  with $\ell$ starting at $x^+$.
Moreover, the orientation of pseudolines induces the order of crossings on every pseudoline.
A marked arrangement $({\cA},x^+)$ is \emph{proper} if the order of  
the intersection points of any two pseudolines is the same for both pseudolines.

A quasiline arrangement $\cA$ is a \emph{monotone quasiline arrangement} if
%there exists a monotone orientation of its pseudolines such that the order of  
%the intersection points of any two pseudolines is the same for both pseudolines, 
%or equivalently 
there exists a proper marked arrangement $({\cA},x^+)$ for some $x^+$ on the
boundary of the disk, representing the line at infinity.
Note that it is not the same to require that there exists an orientation 
of the pseudolines such that the order of  the intersection points 
on any  two pseudolines is the same for both pseudolines. Figure \ref{fig:notquasi}
shows a quasiline arrangement in which the order of crossings 
is the same for any pair of pseudolines if the pseudolines are oriented
alternatingly, however, it is not a monotone quasiline arrangement.
The orientation of the horizontal line induces the orientations on the other pseudolines.
This implies that there is no position for point $x^+$ on the line at infinity for a proper  
marked arrangement.

\begin{figure}[htb]
\centering
  \includegraphics[width=0.35\textwidth]{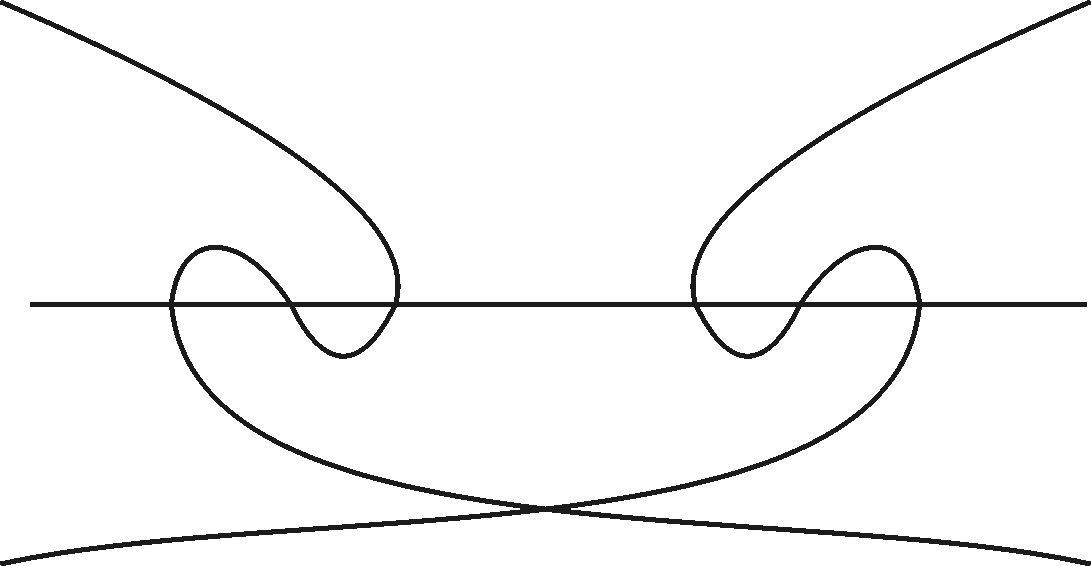}
\caption{A  quasiline arrangement that is not a monotone quasiline arrangement.}
\label{fig:notquasi}
\end{figure}

We now generalize the notion of the  wiring diagram from Section \ref{sec:topological}
to be able to describe also monotone quasiline arrangements.
\emph{A generalized wiring diagram} is a partial wiring diagram 
% which is also a quasiline arrangement, i.e.,
with the property that every two polygonal lines cross an odd number of times.
A generalized wiring diagram can be viewed as a monotone polygonal quasiline arrangement
just like a wiring diagram can be viewed as a pseudoline arrangement. 
%i.e., it is also  a quasi-topological incidence structure. 
Figure  \ref{wiring} shows a generalized wiring diagram which is topologically
equivalent to the quasiline arrangement from Figure ~\ref{fig:fano} (a). 
Observe that by adding seven points (corresponding to the crossings where three 
pseudolines cross) we arrive at the $(7_3)$ configuration.

\begin{figure}[htb] 
\centering {\includegraphics[trim = 0mm 0mm 0mm 5mm, clip, scale=0.7]{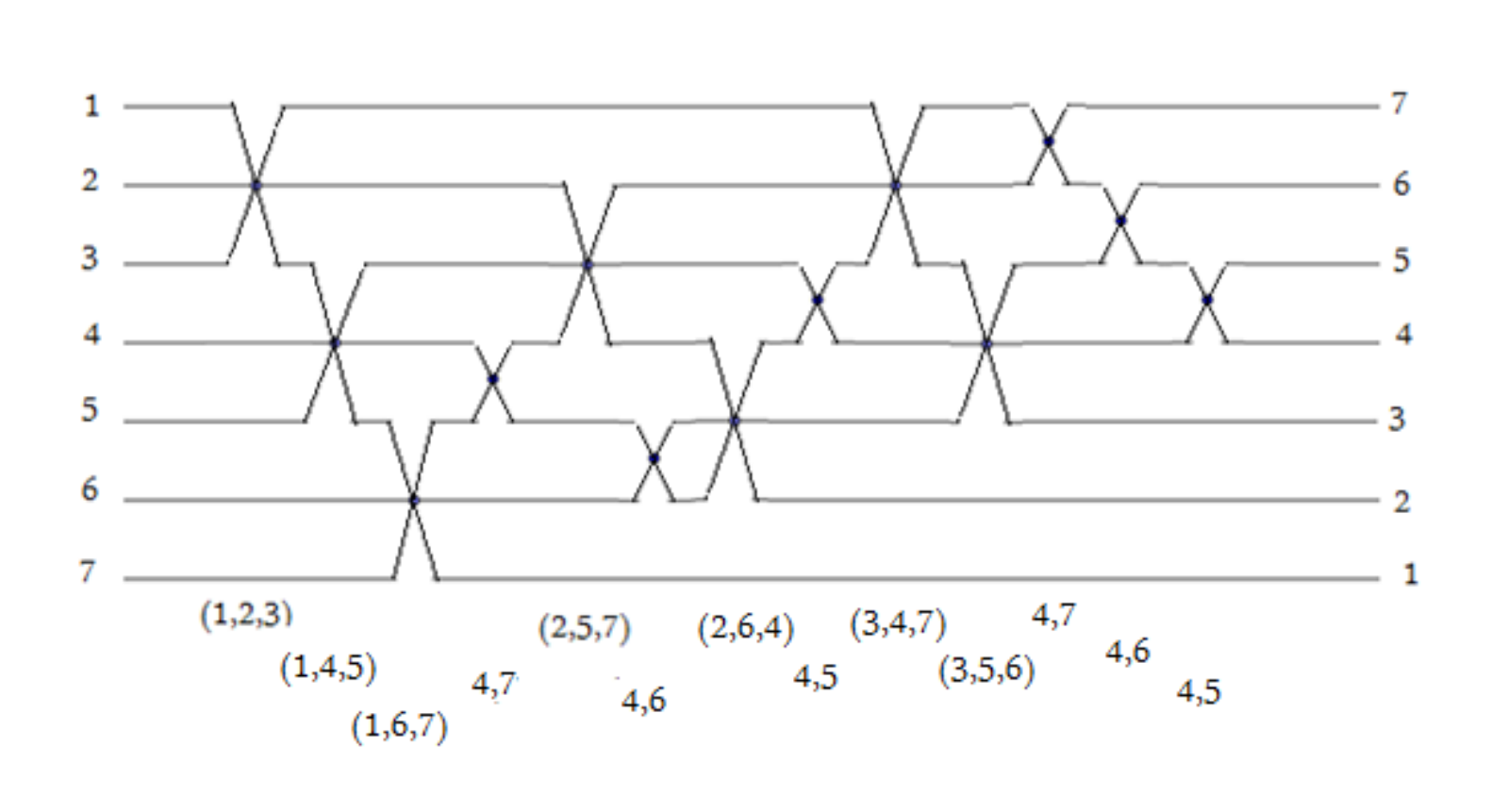}}
\vspace*{-0.5cm}
 \caption{A generalized wiring diagram of the Fano plane from Figure ~\ref{fig:fano} (a)}
 \label{wiring}
\end{figure}

One of the main results of the paper is the following generalization of
Theorem    \ref{cor:pseudowiring}.
% remark - Goo80 - the theorem is not there!

\begin{theorem}                               \label{cor:monotonewiring}
Every monotone quasiline arrangement is
isomorphic  to a generalized wiring diagram.
\end{theorem}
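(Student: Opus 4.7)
The plan is to adapt the standard sweep argument for pseudoline arrangements (as described in Goodman's survey \cite{handbook}) to the monotone quasiline setting, exploiting the proper marking to obtain a well-defined ``allowable sequence of permutations'' that encodes a generalized wiring diagram.

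First I would fix a monotone quasiline arrangement $\cA$ together with a proper marking $x^+$. By definition this yields a linear order $\ell_1,\ldots,\ell_n$ of the pseudolines (their order of first appearance along $\ell^+$ starting at $x^+$), an induced orientation on each pseudoline, and the key compatibility: for every pair $\ell_i,\ell_j$, the sequence of their crossings is the same when read along either pseudoline. Next I would construct a continuous family of sweep arcs $\gamma_t$, $t\in[0,1]$, inside the disk model, with $\gamma_0=\ell^+$ and $\gamma_1=\ell^-$, each $\gamma_t$ a simple arc from $x^+$ to $x^-$ in general position (avoiding vertices of $\cA$ except at finitely many transit times). At every non-transit time, the intersections $\gamma_t\cap\cA$ inherit a linear ordering from $\gamma_t$; at an isolated transit time the sweep passes through a single vertex $v$ of $\cA$, which transposes the positions of the pseudolines meeting at $v$ in the current ordering. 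The proper marking ensures that only adjacent positions swap: if $\ell_i$ and $\ell_j$ were about to cross on $\gamma_t$ but had a third line $\ell_k$ between them, then the ordering of crossings along $\ell_i,\ell_j,\ell_k$ would become inconsistent, contradicting properness.

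Recording the sequence of adjacent transpositions as the sweep progresses, I would obtain an allowable sequence of permutations generalizing the classical one: each pair $(i,j)$ now exchanges positions an odd number of times, since $\ell_i$ and $\ell_j$ cross an odd number of times in a quasiline arrangement. From this sequence one constructs a generalized wiring diagram in the standard way --- draw $n$ horizontal wires labelled $1,\ldots,n$ from top to bottom on the left, and for each recorded transposition insert a short crossing of the appropriate two adjacent wires. The resulting diagram is $x$-monotone, every pair of wires crosses an odd number of times, and therefore it is a generalized wiring diagram in the sense of the paper. Reading it as a monotone polygonal quasiline arrangement in the extended Euclidean plane, its cell complex is determined by the same combinatorial sweep data as $\cA$, hence the two arrangements are isomorphic.

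The main obstacle is verifying that only adjacent transpositions occur in the sweep, i.e.\ that the sweep can be pushed across one vertex at a time while respecting the ordering of intersections. The subtlety, absent in the pseudoline case, is that a single pair of pseudolines meets $\gamma_t$ repeatedly, so naively one could fear a ``non-adjacent'' swap; the proper-marking hypothesis is precisely what rules out the pathological configuration of Figure~\ref{fig:notquasi}. A secondary technical point is handling singular crossings where more than two pseudolines meet; these can be resolved by a small perturbation that splits them into nearby simple crossings, or handled directly by a block transposition, exactly as in the classical pseudoline proof.
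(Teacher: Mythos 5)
Your overall strategy---sweep the marked arrangement, record the induced sequence of permutations as a generalized allowable sequence, and read off a generalized wiring diagram---is exactly the route the paper takes. The genuine gap is that you posit the existence of the sweep rather than proving it. The step ``I would construct a continuous family of sweep arcs $\gamma_t$, \dots, each $\gamma_t$ a simple arc from $x^+$ to $x^-$ in general position'' assumes precisely what is hardest: that one can deform $\ell^+$ into $\ell^-$ through arcs each of which meets every pseudoline exactly once, passing through one vertex of $\cA$ at a time. In a quasiline arrangement a pseudoline can wind back and forth, so a candidate sweep arc is easily forced to meet some pseudoline several times (and then the intersections with $\gamma_t$ do not even define a permutation of $\{1,\dots,n\}$); for a non-proper marking, as in Figure~\ref{fig:notquasi}, no such family exists at all. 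The paper devotes all of Section~\ref{section:sweeping} to this point: it associates to the proper marked arrangement a plane digraph on the crossings, proves that properness forces this digraph to be acyclic (Lemma~\ref{lem:acyclic}, by far the most delicate argument in the paper, a case analysis showing that a minimal directed cycle would force a directed cycle on two pseudolines or a self-intersection), and then builds the sweep arcs $c_0,\dots,c_r$ one vertex at a time along a topological sort of the acyclic digraph (Lemma~\ref{thm:sweep}). Nothing in your proposal replaces this; it is the core of the theorem, not a technicality.

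A secondary point: you identify the ``main obstacle'' as showing that only adjacent positions (or adjacent blocks, at singular crossings) are transposed, and you invoke properness there. That is the wrong place to spend the hypothesis. Once a sweep in the paper's sense exists, adjacency is automatic: the region between two consecutive sweep arcs contains a single vertex $v$, and each pseudoline crosses that region in a single arc, so a pseudoline avoiding $v$ cannot separate two pseudolines through $v$ inside the region without creating a second vertex there. Properness is consumed earlier, in guaranteeing that the sweep exists at all. Your handling of singular crossings by a block reversal is consistent with the paper's definition of a move, and the final translation from the generalized allowable sequence to a generalized wiring diagram matches the paper.
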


We will prove the theorem in Section \ref{section:wiring}.
To do this we will need the notions of sweeping
and allowable sequences of permutations, which we introduce
in the next two sections.

%--------------------------------------------------------------------------------
\section{Sweeping quasiline arrangements}
\label{section:sweeping}

In this section we  introduce the notion of sweeping and show that every
monotone quasiline arrangement has a sweep.
With a slight modification working in the disk model of the projective
plane instead of the Euclidean plane, we follow S. Felsner and H. Weil 
\cite{Felsner}.

Let $({\cA}, x^+)$ be a proper marked quasiline arrangement.
A \emph{sweep} of $({\cA}, x^+)$ is a sequence 
$c_0, c_1, \dots, c_r$ , of pseudolines such that the following conditions hold:
\begin{itemize}
\item[(1)] each pseudoline $c_i$ crosses the line at infinity exactly once at $x$,
\item[(2)] none of the pseudolines $c_i$ contains a vertex of the arrangement $\cA$,
\item[(3)] each pseudoline $c_i$ has exactly one point of intersection 
           with each pseudoline of $\cA$,
\item[(4)] any two pseudolines $c_i$ and $c_j$ intersect exactly once at $x$,
\item[(5)] for any two consecutive pseudolines $c_i, c_{i+1}$ of the sequence
           there is exactly one vertex of the arrangement $\cA$ between them,
           i.e.,  in the interior of the region bounded by $c_i$ and $c_{i+1}$,
\item[(6)] every vertex of the arrangement is between a unique pair of consecutive 
           pseudolines, so the interior of the region bounded by $c_0$ and $c_{r}$ 
           contains all the vertices of  $\cA$.
\end{itemize}

We will  show that every monotone quasiline arrangment has a sweep.
To this end we define  a directed graph, or briefly a digraph,  
$D = D({\cA}, x^+)$ that corresponds to a marked quasiline arrangement 
$({\cA}, x^+)$ as follows. 
The vertices of $D$ are the vertices of $\cA$ and there is a directed edge 
for every pair of vertices  $u$ and $v$ that are consecutive on an arc of some
pseudoline oriented from $u$ to $v$ that has an empty intersection
with the line at infinity. We call such a digraph to be \emph{associated with}
the marked arrangement $\cA$. Note that this digraph is embedded in the plane.
In \cite{Felsner} S. Felsner and H. Weil prove that the digraph associated with 
a marked arrangement of pseudolines is acyclic.
We prove the following generalization of this result.

\begin{lemma}                                \label{lem:acyclic}
The digraph associated with a proper marked quasiline arrangement is acyclic.
\end{lemma}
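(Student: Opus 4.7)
The plan is to argue by contradiction. Assume $D=D(\cA,x^+)$ contains a directed cycle. By passing to a simple sub-cycle we obtain a simple directed cycle $C=v_0\to v_1\to\cdots\to v_k=v_0$, with $v_i\to v_{i+1}$ an arc on a pseudoline $p_i$. Since every edge of $D$ lies on an arc of a pseudoline that does not meet the line at infinity, $C$ is a simple closed curve in the interior of the disk model of $\mathbb{RP}^2$, and the Jordan curve theorem yields a topological disk $R$ with $\partial R=C$.

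The heart of the argument is an extremal-rank step combined with the proper marking condition. For each $i$, denote by $r_{p_i}(v_i)$ the rank of $v_i$ in the monotone ordering of crossings along $p_i$, and set $m=\min_i r_{p_i}(v_i)$. Fix an index $i_0$ attaining the minimum. The preceding edge $v_{i_0-1}\to v_{i_0}$ lies on $p_{i_0-1}$, so $r_{p_{i_0-1}}(v_{i_0-1})=r_{p_{i_0-1}}(v_{i_0})-1\ge m$ by minimality, which gives
\[
r_{p_{i_0-1}}(v_{i_0})\;\ge\;m+1\;>\;m\;=\;r_{p_{i_0}}(v_{i_0}).
\]
Thus at $v_{i_0}$ the rank on the incoming pseudoline strictly exceeds the rank on the outgoing pseudoline.

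I would then study the initial sub-arc $\alpha\subset p_{i_0}$ from its intersection with the line at infinity up to $v_{i_0}$, and the terminal sub-arc $\beta'\subset p_{i_0-1}$ from $v_{i_0}$ to its intersection with the line at infinity. After a small combinatorial perturbation reducing to regular crossings, every vertex of $\cA$ on the interior of $\alpha$ lies on exactly two pseudolines, one of which is $p_{i_0}$. Minimality of $m$ prevents such a vertex from being a vertex of $C$ that carries a $C$-edge on $p_{i_0}$; if it is a vertex of $C$ at all, both incident $C$-edges lie on the other pseudoline through it, and this in turn is ruled out by applying the proper marking condition to the relevant pair of pseudolines. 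Hence $\alpha$ meets $C$ only at $v_{i_0}$ and approaches $v_{i_0}$ from outside the disk $R$. A parallel analysis (using the strict rank inequality above in place of minimality) controls the local side of $\beta'$ at $v_{i_0}$. The four sectors cut out by the transverse crossing of $p_{i_0-1}$ and $p_{i_0}$ at $v_{i_0}$, together with the fact that $R$ occupies one specific sector, then force $\alpha$ and $\beta'$ to sit on incompatible sides, yielding the desired topological contradiction.

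The main obstacle is precisely the step in which the proper marking condition is used to eliminate the troublesome transit vertices on $\alpha$ (and symmetrically on $\beta'$): one needs to verify that a putative $C$-vertex on the interior of $\alpha$ whose incident $C$-edges avoid $p_{i_0}$ would force an inversion of the common order of crossings of some pair of pseudolines. This is the unique place where the full strength of the proper marking condition is needed; the mere existence of a monotone orientation does not suffice, as illustrated by the arrangement in Figure~\ref{fig:notquasi}.
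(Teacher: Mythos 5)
Your proposal is a plan with an explicitly acknowledged hole at its decisive step, and that hole is not a routine verification --- it is where the whole content of the lemma lives. The extremal-rank setup does accomplish something: if the arc $\alpha$ of $p_{i_0}$ re-meets the cycle $C$ at a vertex $w=v_j$ one of whose incident $C$-edges lies on $p_{i_0}$ itself, then $r_{p_{i_0}}(v_j)<m$ or $r_{p_{i_0}}(v_{j-1})<m$ contradicts minimality. But the remaining case --- $w$ a vertex of $C$ interior to $\alpha$ whose two $C$-edges lie on pseudolines other than $p_{i_0}$ --- is exactly the case you defer to ``applying the proper marking condition to the relevant pair of pseudolines,'' without saying which pair or how an order inversion arises. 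Since the lemma is false without properness (your own reference to Figure~\ref{fig:notquasi} shows a monotone orientation alone does not suffice), any proof must use that hypothesis in a concrete, checkable way, and your sketch never does. A second, related problem: the displayed inequality $r_{p_{i_0-1}}(v_{i_0})>r_{p_{i_0}}(v_{i_0})$ compares ranks in two different orderings --- the crossings of $v_{i_0}$'s position along $p_{i_0-1}$ and along $p_{i_0}$ count intersections with entirely different sets of pseudolines --- so it has no intrinsic geometric meaning, and the properness condition (which only constrains the relative order of the \emph{common} crossings of a fixed pair of pseudolines on each of the two) cannot be brought to bear on it directly. The claim that this inequality ``controls the local side of $\beta'$ at $v_{i_0}$'' is therefore unsupported; in fact, at a transversal crossing the four rays alternate, so exactly one of the terminal ray of $\alpha$ and the initial ray of $\beta'$ points locally into the Jordan region $R$ regardless of any ranks, and the real work is to follow that ray until it must escape through $C$ and to derive a contradiction there --- which returns you to the unresolved case above. (The reduction to regular crossings by perturbation also needs a remark that a directed cycle survives the perturbation, but that is fixable.)

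For comparison, the paper proceeds quite differently: it first uses properness to exclude directed cycles all of whose edges lie on only two pseudolines, then chooses an \emph{innermost} directed cycle, argues it bounds a face, and exploits the cyclic order in which the pseudolines $\ell_0,\dots,\ell_{t-1}$ carrying the cycle edges meet the line at infinity. Each $\ell_i$ must reach both boundary arcs $\ell^+$ and $\ell^-$, forcing a cascade of crossings with $\ell_0$ whose positions are pinned down step by step until the last pseudoline cannot escape without creating a two-pseudoline directed cycle or a self-intersection. That argument localizes all uses of properness to the single, easily verified statement about two-pseudoline cycles. If you want to salvage your extremal approach, you would need to prove the analogous statement for the transit vertices on $\alpha$; as it stands, the proof is incomplete.
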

\begin{Proof}
Without loss of generality we may  assume that the line at infinity is oriented counterclockwise.
Let $({\cA}, x^+)$ be a proper marked quasiline arrangement and
$D$ its associated digraph. Note that $D$ is a plane graph, embedded in
the interior of the disk, representing the projective plane.
The interior of the disk is homeomorphic to the plane,
therefore the Jordan curve theorem applies. 

First we observe that $D$ contains no directed cycles of length 2, 
otherwise there are two lines with different orders of vertices on them 
and $({\cA}, x^+)$ is not a proper marked arrangement. For the same reason
there are no directed cycles, all the edges of which belong to two pseudolines.

Suppose $D$ contains a directed cycle. Let $C$ be a  directed cycle, given by the
sequence of vertices and edges $v_0,e_0,v_1,\dots,e_{t-1},v_t=v_0$ such that
no other directed cycle is contained in the area bounded by $C$.
It is easy to see that $C$ bounds a face of $D$. 
%  otherwise there exists a  directed cycle in  the interior of $C$. 
Since at each vertex of the arrangement 
there meet at least two pseudolines, two consecutive edges of $C$ lie
on different pseudolines.  
Suppose that $C$ is oriented clockwise; see Figure \ref{Fig:acyclic}.
If $C$ is oriented counterclockwise, the proof is similar.

Now consider the arrangement $\cA^\prime$ consisting only of the pseudolines 
$\ell_0,\dots,\ell_{t-1}$ containing the edges $e_0,\dots,e_{t-1}$ of $C$ consecutively.
Note that also  $(\cA^\prime,x^+)$ is a proper marked arrangement.
Denote by $p_i$  the intersection of $\ell_i$ with the line at infinity $\ell$ for each $i$. 
We observe that $\ell_i$ and $\ell_{i+1}$ are distinct 
%and also $p_{i-1}$ and $p_{i+1}$   % not true!
%are distinct otherwise we have a directed cycle consisting of only two pseudolines
(the indices are taken modulo $t$), since two consecutive edges of $C$ lie on
different pseudolines.
Without loss of generality we may assume that $p_0^+$ appears first  after $x^+$.

Since $p_1^+$ appears after $p_0^+$ on the line at infinity, the 
arc of $\ell_1$ from  $v_1$ to $p_1^+$ must intersect $\ell_0$ to reach $\ell^+$.
It can intersect $\ell_0$ only between $p_0^+$ and $v_0$ (an odd number of times), 
otherwise we obtain a directed cycle contained in only two pseudolines.
Similarly, since $p_1^-$ appears after $p_0^-$ on the line at infinity, the 
arc of $\ell_1$ from $v_2$ to $p_1^-$ must intersect $\ell_0$ to reach $\ell^-$.
It can intersect $\ell_0$ only between $v_1$ and $p_0^-$ (an odd number of times); 
see Figure \ref{Fig:acyclic}.

\begin{figure}[htb] 
\centering {\includegraphics[scale=0.3]{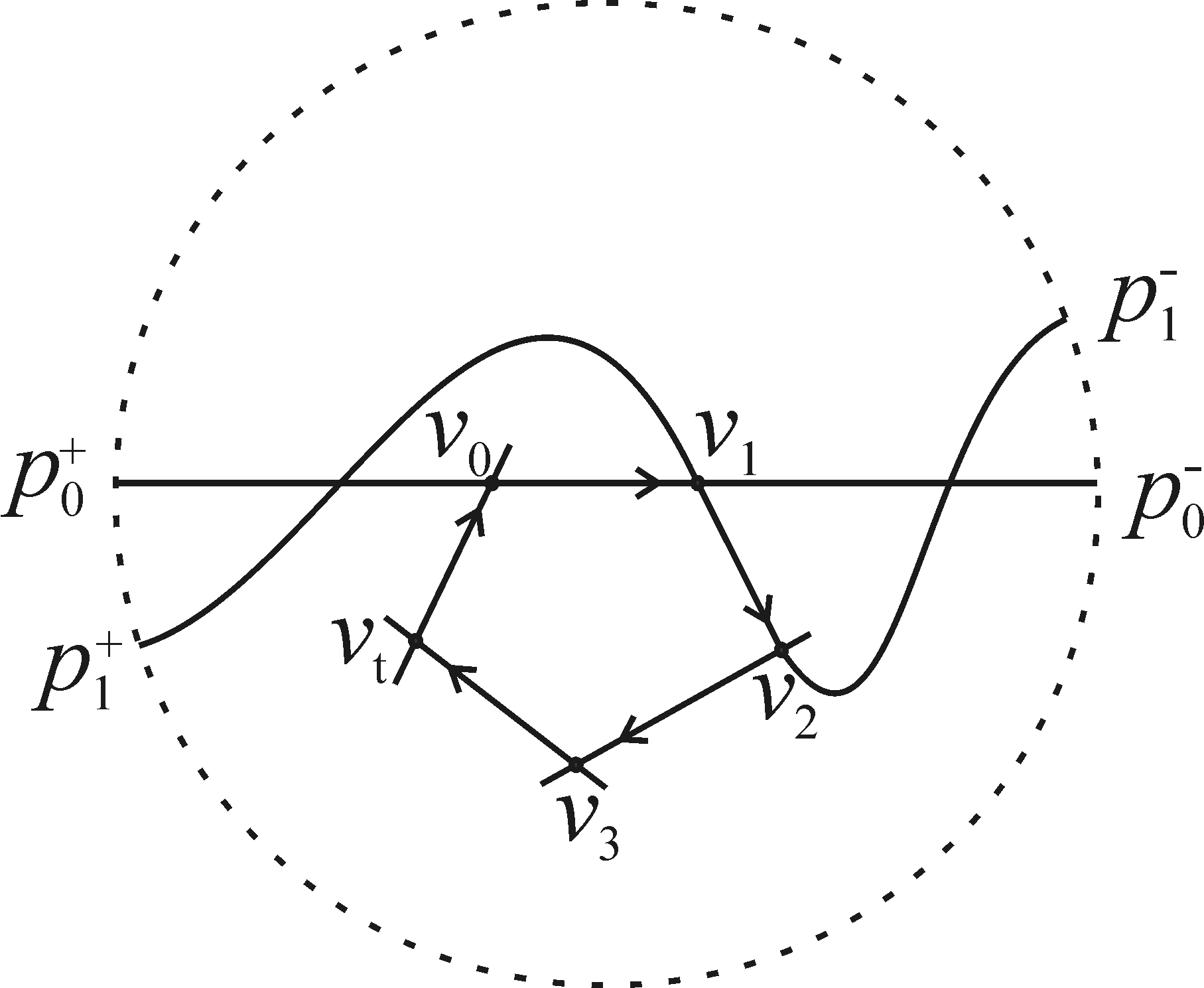}}
\vspace*{-0.2cm}
\caption{A  directed cycle in a marked quasiline arrangement.}
\label{Fig:acyclic}
\end{figure}

Now the arc of $\ell_2$ between $v_2$ and $p_2^+$ cannot intersect $\ell_1$
after $v_2$, since then we again obtain a directed cycle on two lines.
Therefore it must intersect $\ell_0$ after $v_1$ and then again before $v_0$
to reach $\ell^+$. If $t>2$, we see that also the arc of $\ell_2$
between $v_3$ and $p_2^-$ must intersect $\ell_0$ to reach $\ell^-$.
In order to avoid self-intersection it can cross $\ell_0$ only after $v_1$ (at least once).
With the same reasoning we conclude that for each of $\ell_i$, $i=2,\dots,t-2$,
\begin{itemize}
\item the arc of $\ell_i$ between $v_{i}$  and  $p_i^+$ first intersects  
      $\ell_0$ after $v_1$ exactly once and then again before $v_0$ at least once;
\item the arc  of $\ell_i$ between $v_{i+1}$ and $p_i^-$  intersects 
       $\ell_0$ only after $v_1$  (at least once).
\end{itemize}

For the line $\ell_{t-1}$ there are two cases to consider.
\begin{itemize}
\item The pseudolines $\ell_{t-1}$ and $\ell_1$ are distinct.
      As before, the arc of $\ell_{t-1}$ between $v_{t-1}$ and $p_{t-1}^+$ 
      first intersects  $\ell_0$ after $v_1$ exactly once and then again 
      before $v_0$ at least once. But then the arc of $\ell_{t-1}$ after $v_0$ 
      cannot reach $\ell^-$ without intersecting $\ell_0$ before $v_0$ 
      thus producing a directed cycle on two pseudolines, or self-intersection.
      A contradiction.
\item The pseudoline $\ell_{t-1}$ is the same as $\ell_1$. 
      In that case the vertices $v_{t-2},v_0$ and $v_1$ follow
      in that order on $\ell_1$, otherwise we don't have a proper marked arrangement.
      But then the arc of $\ell_{t-2}$ from $v_{t-2}$ to $p_{t-2}^-$ must
      intersect $\ell_1$ before $v_{t-2}$, thus producing a directed cycle
      on two pseudolines. A contradiction.
      \end{itemize}
\end{Proof}

\begin{lemma}                         \label{thm:sweep}
Every monotone quasiline arrangement has a sweep.
\end{lemma}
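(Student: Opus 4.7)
The plan is to adapt the Felsner--Weil sweeping argument for pseudoline arrangements to the monotone quasiline setting, using Lemma~\ref{lem:acyclic} as the key ingredient that replaces the corresponding acyclicity result for pseudolines. Given a proper marked quasiline arrangement $(\cA, x^+)$, I would construct the sweep $c_0, c_1, \ldots, c_r$ inductively, where at each step $c_{i+1}$ is obtained from $c_i$ by pushing it locally past a single vertex of $\cA$.

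For the base case, take $c_0$ to be a simple closed curve through $x$ that hugs the boundary of the disk model very close to $x^+$. Sufficiently near $x^+$ the pseudolines of $\cA$ are pairwise disjoint and emerge transversally from the line at infinity, so $c_0$ can be chosen to meet each pseudoline of $\cA$ exactly once (near its entry point on $\ell^+$), to avoid every vertex of $\cA$, and to cross $\ell$ only at $x$. Hence conditions (1)--(3) hold for $c_0$, and all vertices of $\cA$ lie on a single side of $c_0$.

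For the inductive step, let $D_i$ denote the subdigraph of $D(\cA, x^+)$ induced by the vertices still lying on the ``far'' side of $c_i$. By Lemma~\ref{lem:acyclic}, $D_i$ is acyclic, so it has a source; a standard planarity argument shows that some source $v$ is actually adjacent to $c_i$ in the sense that every arrangement edge with head $v$ crosses $c_i$ before reaching $v$. Define $c_{i+1}$ by deforming $c_i$ inside a small disk neighborhood of $v$ so that it passes just past $v$ on the opposite side. This local modification changes only the intersections of the sweep curve with the pseudolines through $v$, replacing each such intersection (which lay just before $v$ on that pseudoline) by a new one just after $v$, and it introduces no new intersection with any other pseudoline of $\cA$ nor with any previously constructed $c_j$. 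Iterating until $D_i$ becomes empty yields the sweep $c_0, \ldots, c_r$ with $r$ equal to the number of vertices of $\cA$, and then condition~(6) is automatic.

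The principal obstacle is to maintain property~(3) throughout the induction, that each $c_{i+1}$ meets every pseudoline of $\cA$ exactly once. This depends crucially on the source property of $v$ in $D_i$: if some un-swept vertex $v'$ lay between $c_i$ and $v$ along a pseudoline through $v$, then pushing past $v$ would either drag $c_{i+1}$ through $v'$ (violating~(2)) or force an additional intersection with that pseudoline (violating~(3)). Establishing that a source adjacent to $c_i$ always exists inside $D_i$, and confirming that the local push is genuinely well-defined in the quasiline regime where a pair of pseudolines may cross many times, is the technical heart of the argument; the proper marked hypothesis enters precisely because it is what makes Lemma~\ref{lem:acyclic} available.
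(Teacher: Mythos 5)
Your proposal is correct and follows essentially the same route as the paper's proof: both invoke Lemma~\ref{lem:acyclic} to get an acyclic associated digraph and then sweep past one vertex at a time in topological order (your iterative choice of a source of the residual digraph is exactly a topological sort), with your local push past $v$ corresponding to the paper's $\epsilon$-tube around $c_{i-1}$ and the triangle bounded by $c_{i-1}$ and the first and last pseudolines through $v$. The ``adjacency'' of a source to the current sweep curve, which you flag as the technical heart, is precisely what the paper's triangle argument supplies, since all in-neighbours of the chosen vertex already lie behind $c_{i-1}$.
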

\begin{Proof}
Let  $\cA$ be a monotone quasiline arrangement and let $({\cA}, x^+)$ 
be a proper marked quasiline arrangement corresponding to $\cA$.
By Lemma \ref{lem:acyclic}, its associated digraph $D$ is acyclic.
Therefore there exist a topological sorting $v_1, \dots, v_r $
of the vertices  of $D$. 

We will define a sweep consisting of pseudolines $c_0, c_1, \dots, c_r$
starting at $x^+$ and ending at $x^-$, such that in the interior 
of the region bounded by $c_{i-1}$ and $c_{i}$, $i=1,\dots,r$, 
there will be exactly one vertex of the arrangement, namely $v_i$. 

Define $c_0$ to be the pseudoline that starts at $x^+$ and ends at $x^-$at 
and is the right boundary of an $\epsilon$-tube around the line at infinity.

Suppose that $c_{i-1}$ has been defined for some $i \le r$.
Let $\ell_1, \dots, \ell_t$ be the pseudo\-lines of the arrangement $\cA$
that contain $v_i$, in the order they intersect $c_{i-1}$.
Take the triangle $T$ with sides on $c_{i-1}$, $\ell_1$ and $\ell_t$
and one of the vertices being $v_i$. This is well defined, 
since $\ell_1$ and $\ell_t$ intersect $c_{i-1}$ only once 
by definition of $c_{i-1}$. Only vertices $v_1,\dots, v_{i-1}$ of the
arrangement (and all of them) are on the other side of $c_{i-1}$ as $v_i$, 
therefore all the lines $\ell_1, \dots, \ell_t$ are directed towards $v_i$
and there are no other vertices of the arrangement in the triangle $T$
besides $v_i$. Define $c_i$ to be the right boundary of an $\epsilon$-tube
around $c_{i-1}$ and $T$.  If $\epsilon$ is small enough, only vertex $v_i$ 
will be in the interior of the region bounded by $c_{i-1}$ and $c_{i}$
and $c_{i}$ will intersect every line of the arrangement only once.

Clearly the pseudolines $c_0, c_1, \dots, c_r$ obtained in this way
define a sweep of the marked arrangement $({\cA}, x^+)$.
\end{Proof}

%--------------------------------------------------------------------------------
\section{Generalized allowable sequences and wiring diagrams}
\label{section:wiring}

Wiring diagrams  and allowable sequences are standard tools
for describing pseudoline arrangements; see J.~E. Goodman \cite{handbook}
and J.~E. Goodman and R. Pollack \cite{GoodmanPollack}. 
%\cite{Felsner} and \cite{BokowskiPilaud}.
We need to generalize these two notions in order to be able to describe
also  monotone quasiline arrangements and monotone quasi-topological
incidence structures.
We introduced generalized wiring diagrams in Section \ref{section:quasiline}.
In this section we generalize also the notion  of an allowable sequence
and show how generalized wiring diagrams and generalized allowable sequences
are related.

Fix $n \in \bN$.
A sequence $\Sigma=\pi_0, \dots, \pi_r$ of permutations
is called a  \emph{partial allowable sequence of permutations}
if it fulfills the following  properties:
\begin{itemize}
\item[(1)] $\pi_0$ is the identity permutation  on $\{1, \dots, n\}$,
\item[(2)] each permutation $\pi_i$, $1 \le i \le r$, is obtained
           by the reversal of a consecutive substring $M_i$
           from   the preceding permutation $\pi_{i-1}$.
\end{itemize}
For each $i$, $1 \le i \le r$, we call the transition
from the permutation $\pi_{i-1}$ to $\pi_i$ a \emph{move}. 
We will  also call each  $M_i$ a \emph{move}. 

Two partial allowable sequences $\Sigma$ and $\Sigma^\prime$ are
\emph{elementary equivalent} if $\Sigma$ can be transformed into $\Sigma^\prime$
by interchanging  two disjoint adjacent moves.
Two partial allowable sequences $\Sigma$ and $\Sigma^\prime$ are
called \emph{equivalent} if there exists a sequence
$\Sigma=\Sigma_1, \Sigma_2, \dots, \Sigma_m = \Sigma^\prime$ 
of partial allowable sequences such that
$\Sigma_i$ and $\Sigma_{i+1}$ are elementary equivalent for $1 \le i < m$.

%A partial allowable sequence of permutations
%is called a  \emph{partial pseudoline allowable sequence of permutations}
%if any two elements $x,y \in \{1,\dots n \}$ are joint members
%of at most  one move $M_i$.

A partial allowable sequence of permutations
$\Sigma=\pi_0, \dots, \pi_r$ is called an  \emph{allowable
sequence of permutations} if any two elements $x,y \in \{1,\dots n \}$ 
are joint members of exactly  one move $M_i$.

\begin{proposition}
Let $\Sigma=\pi_0, \dots, \pi_r$ be an allowable
sequence of permutations. Then $\pi_r$ is the reverse permutation
on $\{1, \dots, n\}$.
\end{proposition}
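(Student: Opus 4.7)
The plan is to track, for each pair $\{x,y\} \subseteq \{1,\dots,n\}$, how its relative order changes through the sequence $\pi_0, \pi_1, \dots, \pi_r$. First I would observe the following key fact about a single move: if the move $M_i$ consists of reversing a consecutive substring of $\pi_{i-1}$, then the relative order of a pair $\{x,y\}$ is flipped by this move if and only if both $x$ and $y$ lie in the reversed substring $M_i$; if at most one of them lies in $M_i$, their relative order is unchanged. This is immediate from the fact that reversing a block reverses the order of any two entries in that block and leaves the order of entries in disjoint positions untouched.

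Next I would apply the defining hypothesis of an allowable sequence: each pair $\{x,y\}$ is a joint member of exactly one move $M_i$. Combined with the observation above, this means that as we pass from $\pi_0$ to $\pi_r$, the relative order of each pair is flipped exactly once. Since $\pi_0$ is the identity, any pair $x<y$ appears in the order $(x,y)$ in $\pi_0$ and therefore appears in the order $(y,x)$ in $\pi_r$. Hence every pair is in reverse order in $\pi_r$, which forces $\pi_r$ to be the reverse permutation $n, n-1, \dots, 1$.

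I do not expect any real obstacle here; the statement is essentially a bookkeeping argument about inversions. The only thing one must be slightly careful about is making clear why ``exactly one flip per pair'' together with the initial identity ordering uniquely determines the final permutation: this is because a permutation on $\{1,\dots,n\}$ is completely determined by the set of pairs $\{x,y\}$ whose order relative to the identity is reversed, and that set is forced here to be the set of all pairs. The proof therefore reduces to the two sentences above, and no further machinery (sweeps, wiring diagrams, etc.) is needed.
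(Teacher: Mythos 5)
Your proof is correct: the observation that a block reversal flips the relative order of a pair exactly when both elements lie in the reversed block, combined with the ``exactly one joint move per pair'' hypothesis, does force every pair to be inverted in $\pi_r$, and the reverse permutation is the unique permutation inverting all pairs. The paper states this proposition without proof, and your inversion-counting argument is precisely the standard justification the authors evidently had in mind, so there is nothing to compare beyond noting that you have supplied the omitted details.
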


A partial  allowable sequence of permutations
$\Sigma=\pi_0, \dots, \pi_r$ is called a  \emph{qeneralized allowable
sequence of permutations} if $\pi_r$ is the reverse permutation
on $\{1, \dots, n\}$.

\begin{proposition}
A partial allowable sequence of permutations
$\Sigma=\pi_0, \dots, \pi_r$ is a generalized allowable
sequence of permutations if and only if any two elements
$x,y \in \{1,\dots n \}$  are joint members of an odd number of  moves $M_i$.
\end{proposition}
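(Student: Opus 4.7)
The plan is to track, for each pair $\{x,y\}$, how many times their relative order flips during the sequence of moves, and then observe that $\pi_r$ is the reverse permutation if and only if every such pair has flipped an odd number of times in total.

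First I would establish the key local invariant: if $M_i$ is the consecutive substring being reversed in passing from $\pi_{i-1}$ to $\pi_i$, then for any two distinct elements $x, y \in \{1,\dots,n\}$, the relative order of $x$ and $y$ in $\pi_i$ differs from their relative order in $\pi_{i-1}$ precisely when both $x$ and $y$ lie in $M_i$. Indeed, if neither lies in $M_i$, the positions of $x$ and $y$ are unchanged. If exactly one of them lies in $M_i$, then that element stays within the positions occupied by $M_i$ while the other stays outside, so their left-right order cannot change. If both lie in $M_i$, then reversing the block $M_i$ reverses the order of every pair of elements inside, $\{x,y\}$ in particular. This is the essential observation; the rest is bookkeeping.

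Next I would iterate: the relative order of $x$ and $y$ in $\pi_r$ compared to $\pi_0$ is flipped if and only if the total number of moves $M_i$ containing both $x$ and $y$ is odd. Since $\pi_0$ is the identity, this says that $x$ precedes $y$ in $\pi_r$ exactly when they are joint members of an even number of moves (assuming $x<y$), and $y$ precedes $x$ exactly when they are joint members of an odd number of moves.

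Finally I would conclude both directions. The permutation $\pi_r$ is the reverse permutation $n, n-1, \dots, 1$ if and only if, for every pair $x < y$, the element $y$ precedes $x$ in $\pi_r$. By the previous step, this happens if and only if every pair $\{x,y\}$ is a joint member of an odd number of moves $M_i$, which is exactly the claimed condition. The only thing that requires minor care is the justification of the local invariant in the case where exactly one of $x, y$ is in $M_i$; I would make explicit that the set of positions occupied by the block is invariant under its reversal, so elements outside the block keep all their order relations with elements inside.
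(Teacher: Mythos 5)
Your argument is correct and complete: the parity invariant (the relative order of $x$ and $y$ flips in a move precisely when both lie in the reversed block $M_i$, since the block occupies the same set of positions before and after reversal) immediately gives that $\pi_r$ is the reverse permutation if and only if every pair is jointly contained in an odd number of moves. The paper states this proposition without proof, so there is no argument to compare against; yours is the natural one and supplies the omitted details, including the only slightly delicate case where exactly one of $x,y$ lies in $M_i$.
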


%\emph{A partial wiring diagram} is a collection of $x$-monotone polygonal lines
%in the  Euclidean plane, each of them horizontal except for a short segment,
%where it crosses another line. 
%%
%\emph{A wiring diagram} is a partial wiring diagram 
%% which is also a pseudoline arrangement, i.e., 
%with the property  that every two polygonal lines cross exactly once.
%%
%%\emph{A  wiring diagram} is a pseudoline arrangement in which
%%all the pseudolines are $x$-monotone polygonal lines in the extended 
%%Euclidean plane, each of them horizontal except for a short segment, 
%%where it crosses another line.
%%
%\emph{A generalized wiring diagram} is a partial wiring diagram 
%% which is also a quasiline arrangement, i.e.,
%with the property that every two polygonal lines cross an odd number of times.

To any partial allowable sequence there corresponds a partial wiring diagram:
\begin{itemize}
\item start drawing  $n$ horizontal lines, numbered with numbers $1,\dots, n$ 
      from top to bottom, at points $(0,n),\dots,(0,1)$,
\item to each move  $M_i$ there corresponds coordinate  $x_i=i$,
\item at each coordinate $x_i$  there is a crossing where the lines in the 
      move  $M_i$ cross transversally; i.e., the order of lines from $M_i$ is reversed,
\item we extend each of the polygonal lines to infinity at both sides horizontally
        (where they meet in a common point).
\end{itemize}

Conversely, to any partial wiring diagram there corresponds a partial
allowable sequence in the following way. We number the lines of the
wiring diagram from top to bottom with numbers $1,\dots, n$.
We start with the identity permutation and after each crossing of the wiring diagram 
we list the lines from top to bottom. 

Obviously, a partial wiring diagram, corresponding to an allowable sequence is
also a wiring diagram. A partial wiring diagram, corresponding to a 
a generalized allowable sequence is also a generalized wiring diagram.

Now we can prove Theorem \ref{cor:monotonewiring}.\\

\noindent
\emph{Proof of Theorem} \ref{cor:monotonewiring}.
Let $\cA$ be a monotone quasiline arrangement  with $n$ pseudolines.
Then there exists a proper marked arrangement $({\cA}, x^+)$ for some
$x$ on the line at infinity.
We may label the lines of ${\cA}$ by numbers $1,\dots,n$ 
in the order in which they are met on the line at infinity starting at $x^+$. 
Take a sweep $c_0, c_1, \dots, c_r$ of $({\cA}, x^+)$. 
It determines a sequence of permutations on the set $\{1,\dots,n\}$:
just list the lines in the order in which they are met on each 
$c_i$, $i=0,\dots,r$, starting at $x^+$.
Since between any two pseudolines $c_i$ and $c_{i+1}$ there is exactly one
vertex of the arrangement, and the order of lines that meet in that vertex
is reversed when they leave the vertex, this sequence of permutations 
is a partial allowable sequence of permutations.
Moreover, since the order of the lines of ${\cA}$ is reversed at $c_r$, 
it is also a generalized allowable sequence of permutations.
To a generalized allowable sequence of permutations there corresponds
a generalized wiring diagram.
This generalized wiring diagram is isomorphic
the original quasiline arrangement by construction.
\hfill $\Box$

%In \cite{Felsner} the following is proved.
%
%\begin{Theorem} {\rm \cite[Theorem 2]{Felsner}}
%There is a bijection between equivalence classes of allowable sequences
%and marked pseudoline arrangements.
%\end{Theorem}
%
%We have shown a generalization of that result to quasiline arrangements.
%
%\begin{theorem}                               \label{thm:alowablequasiline}
%There is a bijection between equivalence classes of generalized allowable sequences
%of permutations and proper marked quasiline arrangements.
%\end{theorem}
%\begin{proof}
%\textbf{Todo!}
%\end{proof}

%--------------------------------------------------------------------------------
\section{Quasi-topological incidence structures}
\label{section:quasiincidence}

In this section the class of quasi-topological configurations 
based on quasiline arrangements is introduced in a way parallel 
to topological configurations that are based on pseudoline arrangements.
An incidence structure is \emph{quasi-topological} if its points are points in
the projective plane, and lines are pseudolines that form a  quasiline arrangement.
A quasi-topological incidence structure is \emph{monotone} if its 
underlying quasiline arrangement is monotone.
A quasi-topological incidence structure  $\cQ$ is a \emph{quasi-topological realization} of
a combinatorial incidence structure $\cC$ if the underlying
combinatorial incidence structure of $\cQ$ is isomorphic to $\cC$.

The notions of combinatorial and topological equivalence for quasi-topological
incidence structures are the same as for topological incidence structures. 
However, since the pseudolines are allowed to cross more than once,
we may extend the notion of mutation equivalence. 
Also the local transformations where one psudoline moves across another
pseudoline such that they form a digon and no other pseudolines are crossed,
and their inverse transformations will be considered as admisible mutations.

Figure \ref{fig:fano} (a) shows a monotone quasi-topological realization
of the $(7_3)$ combinatorial configuration that cannot be realized as
a topological configuration; it  is polygonal with two bends.
However, the quasi-topological realization of the $(7_3)$ configuration 
from Figure \ref{fig:fano} (b) with a 7-fold rotational symmetry 
is not a monotone quasi-topological configuration, since
in every monotone orientation of the pseudolines for some of the pseudolines
the crossings will be in the opposite order. 
%Observe that in both cases no two pseudolines intersect more than three times.

\begin{figure}[htb]
\centering
\begin{minipage}[c]{0.45\textwidth}
  \centering
  \includegraphics[width=0.7\textwidth]{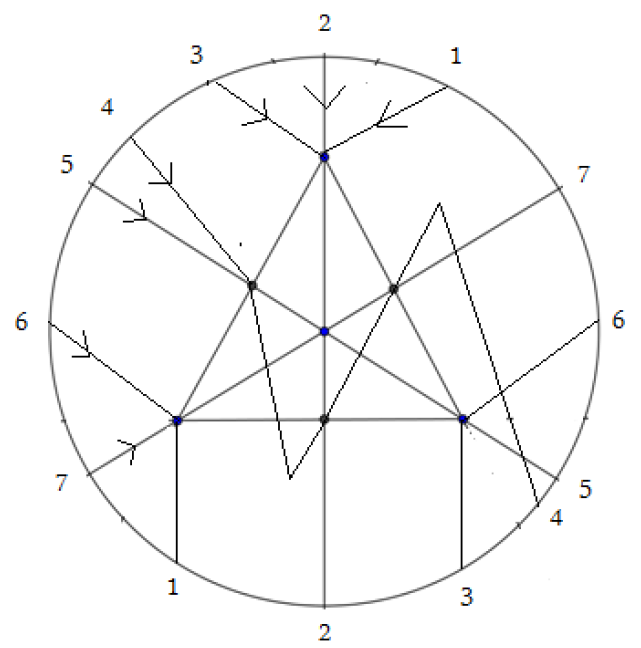}
\end{minipage}%
\begin{minipage}[c]{0.45\textwidth}
  \centering
  \includegraphics[width=0.7\textwidth]{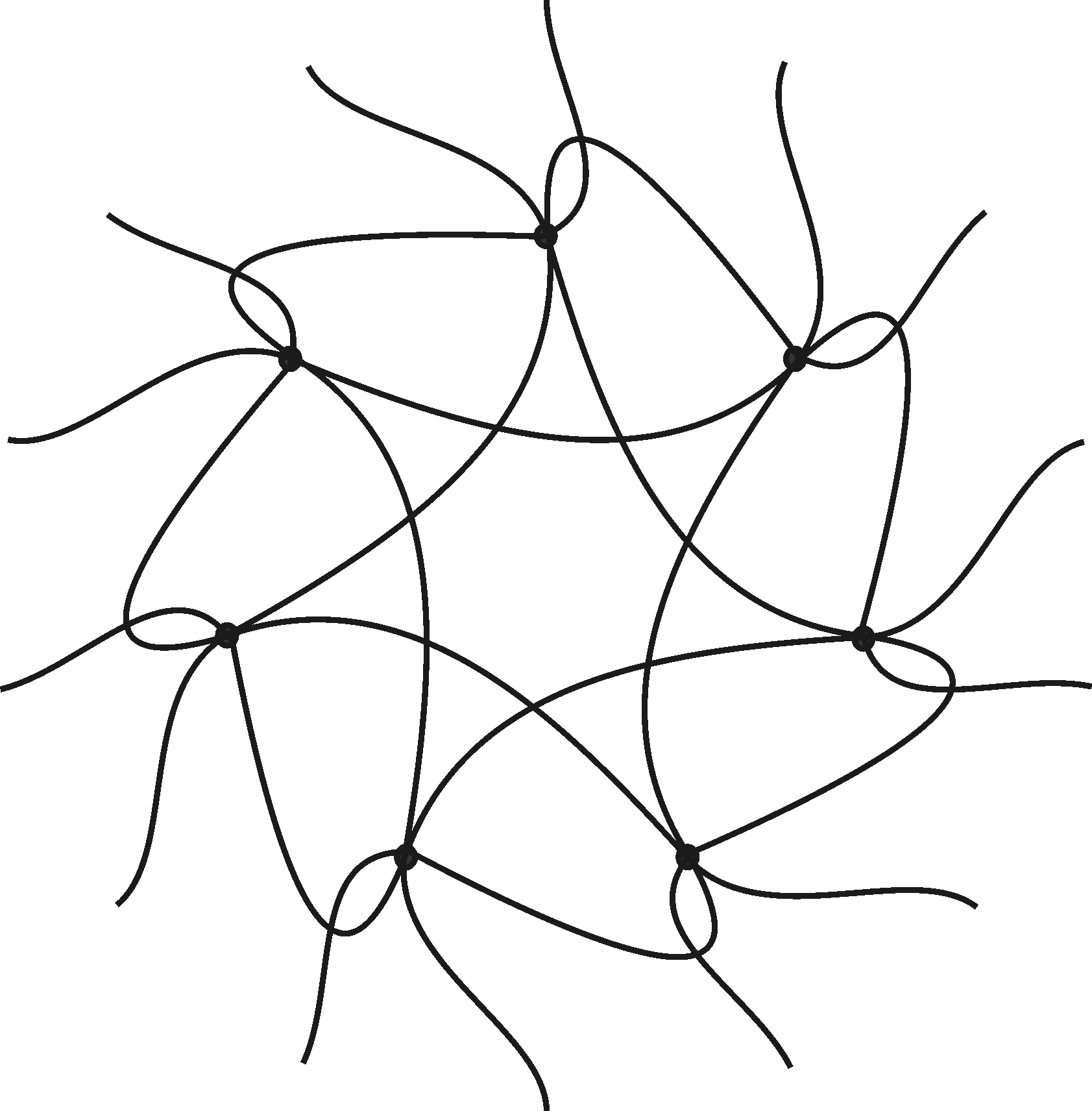}
\end{minipage}
\vspace*{2mm}
\begin{minipage}[t]{0.45\textwidth}
  \centering (a) 
\end{minipage}%
\begin{minipage}[t]{0.45\textwidth}
  \centering (b) 
\end{minipage}
\caption{Two different  quasi-topological realizations of the $(7_3)$ configuration.}
\label{fig:fano}
\end{figure}

By Theorem \ref{cor:monotonewiring} 
all monotone quasi-topological incidence structures 
can be represented with wiring diagrams (where only a subset of the crossings 
of the wiring diagram are considered as the points of the incidence structure).
Obviously, a topological incidence structure is also monotone quasi-topological.
The following theorem therefore shows that  the class of monotone quasi-topological 
incidence structures is in a sense the least generalization of the class of
topological incidence structures.

\begin{theorem}                            \label{thm:structurewiring}
A quasi-topological incidence structure can be represented by
a generalized wiring diagram if and only if it is monotone.
\end{theorem}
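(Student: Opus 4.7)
The theorem is a two-way characterization, so I will split the argument into the two implications and handle them asymmetrically: the ``monotone $\Rightarrow$ wiring diagram'' direction is almost entirely a packaging of Theorem \ref{cor:monotonewiring}, while the converse requires a small direct verification from the definition of a monotone marked arrangement.

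For the forward direction, let $\cQ = (\cP, \cL, \cI)$ be a monotone quasi-topological incidence structure with underlying monotone quasiline arrangement $\cA$. Theorem \ref{cor:monotonewiring} furnishes an isomorphism $\phi$ of $\cA$ with a generalized wiring diagram $\cW$, viewed as a cell complex in the projective plane; in particular $\phi$ restricts to a bijection between the vertex sets of the two arrangements. Transport the point set along $\phi$: set $\cP' := \phi(\cP)$ and let $\cI'$ be the induced incidence relation between $\cP'$ and the lines of $\cW$. The resulting quasi-topological incidence structure $(\cP', \cW, \cI')$ is combinatorially isomorphic to $\cQ$ and is visibly representable by $\cW$. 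So the only thing to observe here is that the isomorphism of arrangements automatically carries the selected point subset across, which is immediate.

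For the converse, assume $\cQ$ is represented by a generalized wiring diagram $\cW$; I need to exhibit a proper marking $x^+$ of the underlying arrangement $\cA$. Place $\cW$ inside a disk large enough to contain all crossings, with the bounding circle playing the role of the line at infinity so that each polygonal line $i$ enters the disk on the left and exits on the right at antipodal boundary points. Choose $x^+$ to be the point at the bottom of the disk lying strictly below all the left-endpoints, and orient the boundary counterclockwise. Traversing the left half of the boundary upward from $x^+$, I encounter the left endpoints of the lines in order $n, n-1, \ldots, 1$; the monotone orientation rule orients each line inward on its first visit, which on the left semicircle is the rightward direction, so every pseudoline of $\cA$ inherits the left-to-right orientation. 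Two pseudolines of $\cW$ meet only at crossings of well-defined $x$-coordinates, and the left-to-right orientation orders the crossings along each line by that $x$-coordinate. Hence the two lines list their mutual crossings in the same order, $({\cA}, x^+)$ is proper, and $\cA$ is monotone by definition.

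The only subtle point, and the place I would be most careful, is the setup of the disk model for the converse: I need the boundary of the disk to identify antipodally and to meet each wire at exactly one pair of antipodal points, so that the wiring diagram really sits inside $\cA$ as a valid arrangement satisfying the standing hypothesis that every pseudoline meets the line at infinity exactly once. Once $x^+$ is placed below all left endpoints and the boundary is oriented so the ascending left semicircle is swept first, the rest is essentially the observation that $x$-monotonicity of the wires translates directly into the ``same order of crossings on both lines'' condition. No sweep or acyclicity argument is needed in this direction, since monotonicity is verified immediately from the definition rather than from Lemma \ref{thm:sweep}.
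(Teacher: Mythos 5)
Your proof is correct and follows essentially the same route as the paper: the monotone-to-wiring-diagram direction is an application of Theorem \ref{cor:monotonewiring}, and the converse rests on the fact that a generalized wiring diagram, placed in the disk model, is itself a monotone quasiline arrangement. The paper merely asserts that fact in Section \ref{section:quasiline} and invokes topological equivalence to transfer it back to $\cQ$, whereas you verify it explicitly by marking $x^+$ below the wire endpoints and ordering crossings by $x$-coordinate --- a welcome addition; the only quibble is that a counterclockwise traversal starting at the bottom of the disk first meets the right-hand endpoints rather than the left-hand ones, which harmlessly reverses all the orientations and leaves the properness argument intact.
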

\begin{proof}
Let $\cQ$ be a quasi-topological incidence structure.
Suppose it can be represented by a generalized wiring diagram, i.e., it is
topologically equivalent to the  monotone quasiline arrangement
that corresponds to the generalized wiring diagram. Then it must
be monotone itself.

Conversely, let $\cQ$ be monotone. Then the underlying quasiline
arrangement $\cA$ is monotone and it can be represented by a generalized
wiring diagram by Theorem \ref{cor:monotonewiring}.
\end{proof}

We now consider the number of crossings in a quasiline arrangement.
Given a quasiline arrangement ${\cA}$, let $p$ be a crossing of ${\cA}$.
We define the \emph{local crossing number} of  $p$ to be ${ k \choose 2}$
if $k$ lines cross at $p$. Observe that the crossing number of a regular crossing is 1.
The \emph{crossing number} of the quasiline arrangement ${\cA}$ is 
the sum over all crossing numbers of its vertices.
Given a quasi-topological incidence structure, every pair of pseudolines
has at least one point in common. If such a point is not a point of the
incidence structure, we call it an \emph{unwanted crossing}.

\begin{proposition}
Given a topological $(n_k)$ configuration, where all the unwanted crossings
are regular, the number of unwanted crossings is
${ n \choose 2} - n {k \choose 2}$.
\end{proposition}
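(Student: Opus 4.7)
The plan is a straightforward double-counting argument over pairs of pseudolines. Since the configuration is topological, its $n$ pseudolines form a pseudoline arrangement, and by definition every two pseudolines of the arrangement meet in exactly one point. Therefore the total number of unordered pairs of pseudolines that share a crossing is exactly $\binom{n}{2}$.

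Next I would partition these pairs according to the crossing at which they meet. Each crossing $v$ contributes its local crossing number: if $m$ pseudolines pass through $v$, then $v$ accounts for $\binom{m}{2}$ pairs. Because the pseudoline arrangement has the property that two pseudolines meet only once, the pairs contributed by distinct crossings are all distinct, so summing local crossing numbers over all crossings of $\cA$ gives exactly $\binom{n}{2}$.

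Now I would split the crossings into the $n$ configuration points and the unwanted crossings. Each configuration point has $k$ pseudolines through it (since this is an $(n_k)$ configuration), so it contributes $\binom{k}{2}$ pairs, for a total of $n\binom{k}{2}$. By the hypothesis that every unwanted crossing is regular, each unwanted crossing contributes exactly $\binom{2}{2}=1$ pair. Writing $u$ for the number of unwanted crossings, the total count becomes
\[
\binom{n}{2} = n\binom{k}{2} + u,
\]
and rearranging gives $u = \binom{n}{2} - n\binom{k}{2}$, as claimed.

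There is essentially no obstacle here: the only substantive input is that the underlying arrangement is a pseudoline arrangement (so pairs of pseudolines meet exactly once), which is part of the definition of a topological configuration. The regularity hypothesis on unwanted crossings is what allows us to identify the number of unwanted crossings with the number of unwanted pairs; without it, one would only obtain the sum of local crossing numbers over the unwanted crossings.
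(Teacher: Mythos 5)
Your double-counting argument is correct, and since the paper states this proposition without proof, your argument is exactly the intended one: each of the $\binom{n}{2}$ pairs of pseudolines meets at exactly one crossing, the $n$ configuration points absorb $n\binom{k}{2}$ of these pairs, and regularity of the unwanted crossings identifies the remaining pairs with the unwanted crossings one-to-one.
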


%\begin{figure}[htb] \vspace{-3mm}
%%\centering \scalebox{0.9}{\includegraphics{Fano.eps}}
%\centering \scalebox{0.7}{\includegraphics{Fano.pdf}}
%\vspace*{-12cm}
% \caption{Fano plane and Mobius-Cantor configuration}
% \label{Fano}
%\end{figure}

%--------------------------------------------------------------------------------
\section{Combinatorial incidence structures as quasiline arrangements}
\label{section:combquasiline}

Not every lineal combinatorial incidence structure can be realized 
as a topological incidence structure. Such examples are the well-known 
configurations $(7_3)$, the Fano plane, and $(8_3)$, the
M\"obius-Kantor configuration; see for example \cite[Theorem 2.1.3]{Grunbaum}. 
In this section we show that every combinatorial incidence structure can be realized
as a monotone quasi-topological incidence structure. In view of 
Theorem  \ref{thm:structurewiring},  monotone quasi-topological incidence structures
are in a sense the least generalization of topological incidence structures
with the property that any combinatorial incidence structure has a realization
within this class.

\begin{theorem}                                 \label{thm:combinatorialQuasiline}
Every combinatorial incidence structure can be realized as
a monotone quasi-topological incidence structure in the projective plane.
Moreover, the  order of pseudolines that come to each point of the quasi-topological 
incidence structure may be prescribed.
\end{theorem}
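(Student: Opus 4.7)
The plan is to reduce the problem to constructing a generalized allowable sequence of permutations whose associated generalized wiring diagram realizes $\cC$ with the prescribed local orders. Once such a diagram is exhibited, Theorem \ref{cor:monotonewiring} together with Theorem \ref{thm:structurewiring} will immediately give a monotone quasi-topological realization of $\cC$ in the projective plane.

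Let $n = |\cL|$ and enumerate the points as $p_1, \dots, p_v$. For each $j$, let $S_j = \{i : p_j \in L_i\} \subseteq \{1, \dots, n\}$, and let $\tau_j$ be the prescribed order of the lines through $p_j$. I would build the sequence of permutations inductively, starting from $\pi_0 = \mathrm{id}$. At stage $j$, given the current permutation $\sigma_{j-1}$, I would first apply a sequence of adjacent transpositions (a bubble-sort-like routine) to rearrange $\sigma_{j-1}$ into a permutation in which the elements of $S_j$ occupy consecutive positions in the order $\tau_j$; I would then apply a single reversal move to this consecutive block of length $|S_j|$. This big move corresponds, in the wiring diagram, to a singular crossing of the $|S_j|$ wires indexed by $S_j$, and the cyclic order of these wires around the crossing is exactly $\tau_j$, since in a wiring diagram the cyclic order at a reversal-move crossing is read off from the vertical order of the wires immediately before the move. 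I then set $\sigma_j$ to be the permutation after the reversal and pass to $p_{j+1}$.

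After all $v$ points have been processed, I would append one more block of adjacent transpositions bringing $\sigma_v$ to the reverse permutation $(n, n-1, \dots, 1)$; this is always possible because adjacent transpositions generate the symmetric group. By the very definition of a generalized allowable sequence of permutations given in Section \ref{section:wiring}, the resulting full sequence is generalized allowable, and its associated partial wiring diagram is a generalized wiring diagram. Designating precisely the $v$ block-reversal crossings as the point set of the incidence structure (and treating every small adjacent-transposition crossing as an unwanted regular crossing not belonging to $\cP$), I obtain via Theorem \ref{thm:structurewiring} a monotone quasi-topological incidence structure whose underlying combinatorial incidence structure is isomorphic to $\cC$ and whose local cyclic orders at the points are the prescribed $\tau_j$.

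The main technical point that needs care is verifying the geometric interpretation of a singular reversal crossing: namely, that reversing a consecutive block $a, a+1, \dots, a+r-1$ at a single $x$-coordinate of a generalized wiring diagram produces a singular crossing in the corresponding quasiline arrangement in which the $r$ participating wires have cyclic order $a, a+1, \dots, a+r-1$. This is immediate from the wiring-diagram-to-arrangement correspondence -- each wire is drawn as a ray entering from the left at height $a+i$ and exiting to the right at height $a+r-1-i$, with the in-ray and out-ray of each wire antipodal at the common crossing point -- but it is the one geometric check that distinguishes the present argument from the purely combinatorial shuffling of permutations. Once it is verified, everything else is bookkeeping, and the theorem follows.
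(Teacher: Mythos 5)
Your proposal is correct and follows essentially the same route as the paper: the paper likewise builds a generalized allowable sequence by placing, for each point, the incident lines consecutively in the prescribed order via bubble-sort-style adjacent transpositions, reversing that block as a single singular-crossing move, and finally sorting to the reverse permutation before passing to the associated generalized wiring diagram. Your added remark verifying the cyclic order of wires at a block-reversal crossing is a reasonable (and correct) elaboration of a step the paper leaves implicit.
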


\begin{Proof}
Let $\cC$ be a combinatorial incidence structure and let $v$ be 
the number of points and $n$ be the number of lines of $\cC$.
Without loss of generality we may number the lines of $\cC$ 
by numbers $1, \dots,n$. To each point of $\cC$ we assign a 
substring $M_i$ of $\{1, \dots,n\}$, which corresponds to the incident lines 
of that point in prescribed order; if the  order of lines around points
is not prescribed, it may be arbitrary.
%Denote these subsets by $M_i$, $i=1,\dots,v$. %If $\cC$ is not lineal
%It may happen that some of these subsets are equal.
We find a quasi-topological realization of $\cC$ in the following way:
\begin{itemize}
%\item Number the lines of $\cC$ by numbers $1, \dots,n$.
%      To each point of $\cC$ we assign a subset of $\{1, \dots,n\}$,
%      which corresponds to the incident lines of that point.
%      Denote these subsets by $M_i$, $i=1,\dots,v$. %If $\cC$ is not lineal
%      It may happen that some of these subsets are equal.
\item Let $\pi_1, \dots, \pi_v$ be permutations of $\{1, \dots,n\}$
      with the property that elements from $M_i$ appear consecutively
      in $\pi_i$. Let $\pi_i^\prime $ be obtained from $\pi_i$
      by reversing the order of elements from $M_i$.
      Let $\pi_0^\prime$ be the identity permutation and
      $\pi_{v+1}$ the reverse permutation on $\{1, \dots, n\}$.
\item Form the sequence $\pi_0^\prime,\pi_1,\pi_1^\prime, \dots, \pi_v,\pi_v^\prime,\pi_{v+1}$;
      if $\pi_i^\prime = \pi_{i+1}$, take just one of them.
      If $\pi_i^\prime \ne \pi_{i+1}$, insert a sequence of permutations
      between them to obtain a generalized allowable sequence, for $i=0,\dots v$.
      This can always be done, in a bubble sort like manner by  interchanging
      two adjacent numbers in every step.
\item A generalized allowable sequence corresponds to a generalized wiring diagram
      which in turn corresponds to a monotone quasi-topological incidence structure.
      The points of the incidence structure correspond to the crossings
      $M_i$, $i=1,\dots, v$.
\end{itemize}

\end{Proof}

The proof of Theorem \ref{thm:combinatorialQuasiline} in fact provides
us with an algorithm to construct an actual quasi-topological representation
of a given combinatorial incidence structure. However, the number of
unwanted crossings in such a quasi-topological incidence structure can be high.
The following problem is therefore natural to consider.

\begin{problem}
For a given combinatorial incidence structure determine the minimal
number of crossings to realize it as a quasi-topological incidence structure
in the projective plane.   %extended Euclidean plane.
\end{problem}

By Proposition  \ref{prop:nobends} any topological incidence structure can
be realized as a polygonal pseudoline arrangement with no bends.
With polygonal quasi-topological incidence structures we can not always avoid bends. 
For example, if  a quasi-topological incidence structure consists of three points 
and two polygonal pseudolines joining them, there have to be  bends,
since such a quasi-topological incidence structure contains two digons.
We will show that digons are in fact the only reason for the need of bends for
monotone quasi-topological incidence structures.
First we prove two technical lemmas.

\begin{lemma}                          \label{lemma:alllines}
Let $({\cA}, x^+)$ be a proper marked quasiline arrangement that contains no digons.
Then no crossing of the arrangement  is incident to all of its pseudolines. 
\end{lemma}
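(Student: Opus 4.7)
The plan is to argue by contradiction: suppose some vertex $v$ of $\cA$ is incident to all $n$ pseudolines, and produce a face bounded by exactly two edges. First I would invoke Lemma~\ref{thm:sweep} to obtain a sweep $c_0, c_1, \ldots, c_r$ of the proper marked arrangement $({\cA}, x^+)$, and let $v$ be the unique vertex of $\cA$ between the sweep curves $c_{k-1}$ and $c_k$. Because every pseudoline of $\cA$ passes through $v$, the move across $v$ must reverse the entire permutation of the $n$ pseudolines; hence if the order on $c_k$ is $(\alpha_1, \alpha_2, \ldots, \alpha_n)$, then $\alpha_1, \ldots, \alpha_n$ is precisely the cyclic order in which the pseudolines emanate from $v$ on the $c_k$-side of $v$ (and the reverse on the $c_{k-1}$-side).

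Next, I would focus on the vertex $u$ swept immediately after $v$, i.e., the unique vertex of $\cA$ between $c_k$ and $c_{k+1}$. (If $v$ is the last swept vertex, a symmetric argument using the previous vertex applies; in the degenerate case when $v$ is the only vertex of $\cA$, the arrangement is an $n$-pencil whose faces are already bounded by exactly two edges.) The move at $u$ reverses a consecutive substring $M$ of the order on $c_k$, and since every move reverses at least two entries, I may pick two cyclically adjacent pseudolines $\ell_a = \alpha_t$ and $\ell_b = \alpha_{t+1}$ lying in $M$; both then pass through $v$ (as all do) and through $u$ (by the choice of $M$).

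To see that the arcs of $\ell_a$ and $\ell_b$ from $v$ to $u$ bound a digon I would establish three facts. (i) Since $v$ and $u$ are consecutive in the sweep and the sweep advances monotonically along every pseudoline, the vertices $v$ and $u$ are consecutive crossings of $\cA$ on each of $\ell_a$ and $\ell_b$, so the two arcs have no other crossings on them. (ii) By the description of the cyclic order at $v$ derived in the first paragraph and the fact that $\ell_a,\ell_b$ sit in consecutive positions of $M$ on $c_k$, the rays of $\ell_a$ and $\ell_b$ at $v$ pointing toward $u$ are cyclically adjacent at $v$, so no other pseudoline emerges from $v$ into the region between these arcs. (iii) An analogous local analysis at $u$, using the substring of the $c_k$-order that determines the cyclic order at $u$, shows the corresponding rays at $u$ are cyclically adjacent there. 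Together (i)--(iii) prevent any third pseudoline from entering the 2-cell region bounded by the two arcs, so that region is a face bounded by exactly two edges with endpoints $v$ and $u$---a digon, contradicting the hypothesis.

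The main obstacle is the passage from the sweep permutations to the precise cyclic order of the $2n$ rays at $v$ (and, analogously, at $u$) needed in steps (ii) and (iii). The idea is to work inside the ``lens'' bounded by $c_{k-1}$ and $c_k$, which is a topological disk containing only the vertex $v$; each pseudoline of $\cA$ crosses this disk as a simple arc from $c_{k-1}$ through $v$ to $c_k$, and these arcs are pairwise disjoint away from $v$, so the cyclic order of the pseudolines at $v$ is forced to agree, up to the natural reversal between the two halves of the circle of tangent directions, with the linear order in which the pseudolines meet $c_k$ (respectively $c_{k-1}$).
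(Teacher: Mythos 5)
Your proof is correct, and it reaches the same digon that the paper's argument exhibits, but by a genuinely different route. The paper's proof is short and purely local: it observes that if every pseudoline passes through $v$ then either $v$ is the only crossing (and consecutive pseudolines already bound digons through infinity), or some two pseudolines consecutive in the cyclic order around $v$ must cross again, and a ``nearest'' such second crossing $w$ bounds a digon with $v$. The existence of that nearest crossing is asserted rather than derived. You instead import the sweep machinery of Lemma~\ref{thm:sweep} together with the substring-reversal description of moves from the proof of Theorem~\ref{cor:monotonewiring}: the vertex $u$ swept immediately after $v$ is automatically the required ``nearest'' crossing, two pseudolines adjacent in the move at $u$ are adjacent in the order along $c_k$ and hence (because the move at $v$ reverses the whole permutation) adjacent among the rays leaving $v$, and the lens between consecutive sweep curves pins down the cyclic order of rays at $v$ and at $u$. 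Your steps (i)--(iii) check out: consecutiveness in the topological sort forces $v$ and $u$ to be consecutive on both pseudolines, and the lens argument correctly identifies the empty wedge on the $c_k$-side at $v$ and on the $c_k$-side at $u$, so no third pseudoline can enter the region and it is a genuine two-edge face. What your approach buys is rigor on exactly the point the paper glosses over, at the cost of invoking Sections~5 and~6 (which is legitimate here, since the lemma appears later and its hypothesis of a proper marked arrangement is precisely what the sweep requires); what the paper's approach buys is brevity and independence from the sweep. Both handle the degenerate single-vertex case identically.
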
  
\begin{Proof} 
Suppose there is a crossing $v$  in $\cA$ that is incident to
all the pseudolines. Then there has to be at least one more crossing,
otherwise  each pair of consecutive pseudolines through $v$ forms
a digon, crossing infinity. To form another crossing, at least two pseudolines 
that are consecutive in the cyclic order around $v$ must cross.
At least one crossing $w$ will be such that there is no other crossing 
between $v$ and $w$ on two consecutive lines through $v$. They form
a digon with vertices $v$ and $w$, a contradiction.
\end{Proof}

\begin{lemma}                          \label{lemma:digons}
Let $({\cA}, x^+)$ be a proper marked quasiline arrangement that contains no digons.
Let  $G$ be the undirected plane graph underlying its associated directed graph
$D=D({\cA}, x^+)$. Then the following hold.
\begin{itemize}
\item[{\rm (i)}]  Graph $G$ contains no cycles of length two, i.e., $G$ is simple.
\item[{\rm (ii)}] Graph $G$ is $2$-connected.
\end{itemize}
\end{lemma}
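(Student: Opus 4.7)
For (i), my plan is a direct innermost-arc argument. Each edge of $G$ between $u$ and $v$ corresponds to an arc on some pseudoline joining $u$ and $v$ as consecutive $\cA$-vertices and not meeting the line at infinity; since each pseudoline crosses infinity exactly once (Section~\ref{section:quasiline}), each pseudoline supplies at most one such arc between a given consecutive pair, so two edges of $G$ between $u$ and $v$ come from two arcs on distinct pseudolines. Among all such arcs (there are at least two), choose $e_1$ and $e_2$ that are cyclically adjacent at $u$. Their union is a simple closed curve in the open disk bounding a region $R$. No other such arc lies inside $R$ by adjacency; moreover, any pseudoline entering $R$ must enter through $u$ or $v$ (the only $\cA$-vertices on the boundary of $R$, since $e_1$ and $e_2$ have no interior $\cA$-vertices) and hence pass through both $u$ and $v$. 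Recursively choosing an innermost such configuration inside $R$ shows that some innermost region is bounded by two arcs meeting transversally at their two endpoints and contains no $\cA$-vertex in its interior, making it a $2$-gon face of $\cA$---a digon---contradicting the hypothesis.

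For (ii), I first establish connectedness: for each pseudoline $\ell$ the $\cA$-vertices on $\ell$ form a path $P_\ell$ in $G$ (listed in the order they appear along $\ell$), any two pseudolines share an $\cA$-vertex at their crossing, and so the paths $P_\ell$ pairwise intersect; since their union equals $G$, the graph $G$ is connected. To rule out a cut vertex, suppose for contradiction that $v$ is a cut vertex with components $A,B$ of $G-v$. By Lemma~\ref{lemma:alllines} there is a pseudoline $\ell_0$ not through $v$, so $P_{\ell_0}\subseteq G-v$ lies in one component, say $A$. For any other pseudoline $\ell'$ not through $v$, $P_{\ell'}$ shares the $\cA$-vertex $\ell'\cap\ell_0$ with $P_{\ell_0}$, forcing $P_{\ell'}\subseteq A$. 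Hence every vertex of $B$ lies on some pseudoline through $v$; choose $w\in B$ and a pseudoline $\ell$ through $v$ and $w$, and split $P_\ell$ at $v$ into subpaths $P_\ell^A$ and $P_\ell^B$ with $w\in P_\ell^B\subseteq B$. Every crossing of $\ell$ with any pseudoline $\ell_i$ not through $v$ lies in $P_\ell\cap P_{\ell_i}\subseteq A$, hence in $P_\ell^A$; so $P_\ell^B$ contains no crossing of $\ell$ with any pseudoline avoiding $v$. Consequently every $\cA$-vertex on $P_\ell^B$ is a crossing of $\ell$ with another pseudoline through $v$, forcing that pseudoline and $\ell$ to share $v$ and that vertex---at least two common points---and so by the odd parity of pseudoline crossings, at least three.

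The main obstacle is converting this rich multi-crossing structure among pseudolines through $v$ into a genuine digon of $\cA$. My plan is to imitate the innermost-crossing argument from the proof of Lemma~\ref{lemma:alllines}: among the pseudolines through $v$ that multi-cross $\ell$ on the $B$-side, choose two that are cyclically adjacent in the cyclic order around $v$ of the pseudolines through $v$, and pick their innermost crossing $w$ on the $B$-side (i.e., the one closest to $v$ along both lines). The arcs from $v$ to $w$ along these two pseudolines bound a region that contains no $\cA$-vertex in its interior: any interior crossing would come either from a pseudoline through $v$ cyclically between the two chosen ones at $v$ (contradicting adjacency) or from a pseudoline not through $v$ (ruled out by the $P_\ell^B$ analysis above). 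This region is thus a $2$-gon face of $\cA$, the desired contradiction.
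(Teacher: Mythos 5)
Your part (i) follows the same route as the paper's: locate a face of length two inside the region bounded by a pair of parallel edges. Your observation that a pseudoline entering the region $R$ can only do so through $u$ or $v$ (because the two bounding arcs, being edges of $G$, have no interior crossings) is correct and is a useful sharpening; the concluding ``recursively choose an innermost configuration'' is left at about the same level of detail as the paper's own ``at least one crossing $w$ will be such that\dots'', so it passes on the same terms. For (ii), your reduction is cleaner and more explicit than the paper's: connectedness via the paths $P_\ell$, and the deduction that every vertex of the component $B$ is a crossing exclusively of pseudolines through $v$ (so that such pseudolines meet $\ell$, and hence each other, at least three times) is correct and is essentially the contrapositive of the paper's ``a pseudoline through $v$ either has no crossing before $v$ or is crossed by some $\ell_i$''.

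The genuine gap is in your final paragraph. First, you ``choose two'' pseudolines through $v$ that multi-cross $\ell$ on the $B$-side, but nothing you proved guarantees there are two: $P_\ell^B$ could consist entirely of crossings of $\ell$ with a single pseudoline $m$ through $v$. Second, even when two such pseudolines $m_1,m_2$ exist, they are only guaranteed to meet each other at $v$, so ``their innermost crossing $w$ on the $B$-side'' need not exist; moreover ``closest to $v$ along both lines'' is not automatically well defined, since the $B$-side of $m_1$ may be the before-$v$ side and that of $m_2$ the after-$v$ side of the monotone orientation, so properness does not hand you a common nearest crossing. Third, emptiness of the region bounded by the two arcs from $v$ to $w$ does not follow from cyclic adjacency at $v$: those arcs are not edges of $G$, so other pseudolines --- including ones not through $v$, whose exclusion you established only on $\ell$ itself, not on $m_1$ or $m_2$ --- can cross them in their interiors and enter the region without passing through $v$. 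What is actually needed at this point is an inclusion-minimal lens argument: among all regions bounded by arcs of two pseudolines between two of their common points and disjoint from the line at infinity, take a minimal one and show it must be a face of $\cA$, hence a digon. To be fair, the paper itself invokes ``a digon by a similar reasoning as in (i)'' at exactly this juncture, so your write-up is not materially less complete than the published proof; but as literally stated your innermost step would fail.
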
  
\begin{Proof} (i) We will show that if $G$ contains a cycle of length two, 
then it contains a face of length two which corresponds to a digon in $\cA$.
Suppose $G$ contains a cycle $C$ of length two with vertices $v_1$ and $v_2$.  
If $C$ is the boundary of a face, we are done. Otherwise at least one 
pseudoline connects vertices $v_1$ and $v_2$ in the interior of $C$. 
If there are no vertices of the arrangement in the interior of $C$, 
we have at least two faces of length two in $G$.
Otherwise there is a vertex inside $C$ where at least two pseudolines cross. 
%since every point of $\cA$ lies on at least two pseudolines.
To form a crossing, at least two pseudolines that are consecutive 
in the cyclic order around $v_1$ must cross.
At least one crossing $w$ will be such that there is no other crossings 
between $v_1$ and $w$ on two consecutive lines through $v$. They form
a face of length two of $G$ inside $C$  with vertices $v_1$ and $w$.
\medskip

\noindent
(ii) Suppose $G$ is not 2-connected and let $v$ be a cutvertex in $G$.
%Denote by $\ell_1,\dots,\ell_k$ the pseudolines passing through $v$.
By Lemma \ref{lemma:alllines} not all the pseudolines pass through $v$, 
so there exists also other pseudolines, say $\ell_1,\dots,\ell_k$, 
not incident to $v$. 
Some lines of the arrangement $\cA$ must cross before $v$
and some must cross after $v$, otherwise $v$ is not a cutvertex in $G$.
If only some of the pseudolines through $v$ cross before $v$, 
we have a digon by a similar reasoning as in (i). 
Therefore  a pseudoline through $v$ either has no crossing before $v$ 
or it is crossed by some of the lines $\ell_1,\dots,\ell_k$.
The same holds for the crossings after $v$. 
Since also the lines $\ell_1,\dots,\ell_k$ cross each other at
least once, the graph $G \backslash v$ is connected. A contradiction.
\end{Proof}

\begin{Remark}
Note that the requirement that the order of the intersection points 
of any two lines is the same for both lines in Lemma \ref{lemma:alllines} 
and Lemma \ref{lemma:digons}  is necessary, since otherwise 
it may happen that no two pseudolines that are consecutive
around a vertex $v$ form a digon; see Figure \ref{fig:nodigons}.
\end{Remark}

\begin{figure}[htb]
\centering
  \includegraphics[width=0.4\textwidth]{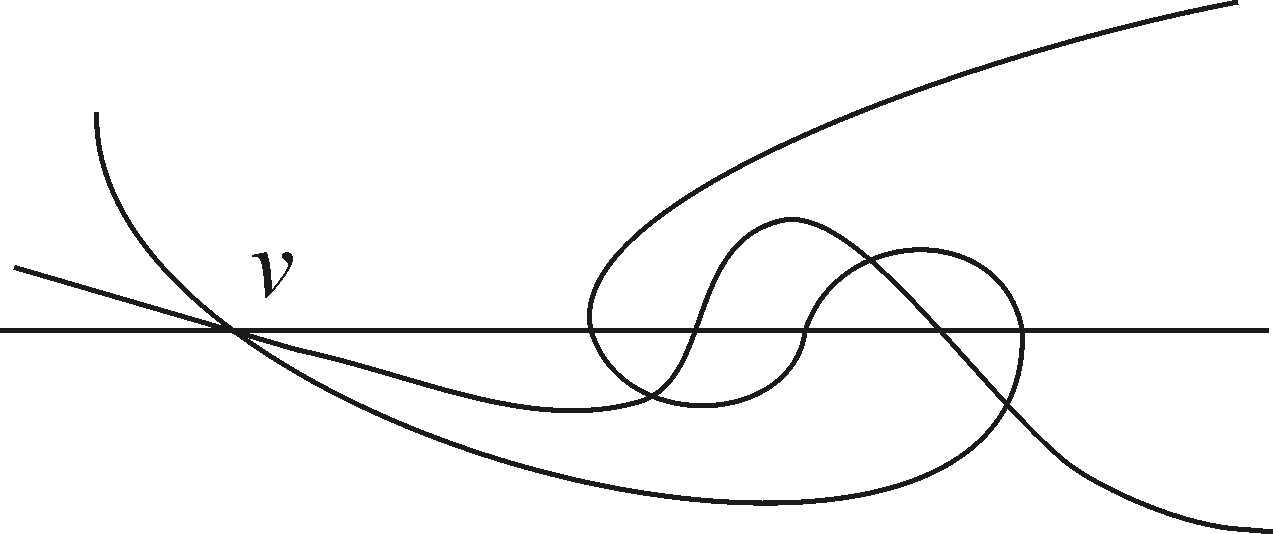}
\caption{No two consecutive pseudolines through vertex $v$ form a digon.}
\label{fig:nodigons}
\end{figure}

\begin{theorem}
Let $\cQ$ be a monotone quasi-topological incidence structure.
Then $\cQ$ is  topologically equivalent to a monotone polygonal quasi-topological 
incidence structure with no bends if and only if the underlying 
quasiline arrangement contains no digons.
% and all the unwanted crossings straight?
\end{theorem}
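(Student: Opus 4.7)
I would handle the two directions separately; the easy one is the necessity, while sufficiency carries the content of the theorem and rests on Lemma~\ref{lemma:digons}.

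\textbf{Necessity.} Suppose $\cQ$ is topologically equivalent to a polygonal monotone quasi-topological incidence structure $\cQ'$ with no bends. Every edge of the plane graph underlying $\cQ'$ is then a single straight segment joining two crossings. A digon would be a face of this plane graph bounded by two such segments with the same endpoints $u,v$ but lying on two different pseudolines. Since two Euclidean segments with the same endpoints coincide, this would force the two pseudolines to share a whole segment, contradicting their distinctness and the transversality of the crossings at $u$ and $v$. Hence neither $\cQ'$ nor $\cQ$, whose underlying arrangement is cell-complex isomorphic to that of $\cQ'$, has a digon.

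\textbf{Sufficiency.} Assume the underlying quasiline arrangement $\cA$ has no digons and fix a proper marking $x^{+}$. Let $G$ be the undirected plane graph underlying the associated digraph $D(\cA,x^{+})$; by Lemma~\ref{lemma:digons}, $G$ is simple and $2$-connected. The idea is to straighten $G$ by a classical straight-line embedding theorem and then extend each pseudoline to the line at infinity by straight rays. Working in the closed disk model of the projective plane, I would first augment $G$ to a plane graph $G^{*}$ by adjoining, for each pseudoline $\ell_i$, its two boundary endpoints $p_i^{\pm}$ as new vertices, the two ``tail'' arcs of $\ell_i$ from $p_i^{\pm}$ to its first and last crossing on $\ell_i$, and the $2n$ boundary arcs between consecutive points $p_j^{\pm}$. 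A short check shows that $G^{*}$ is again simple and $2$-connected. Invoking the F\'ary--Wagner straight-line embedding theorem in the version that preserves a prescribed combinatorial embedding and allows the outer face to be chosen as a convex polygon yields a straight-line redrawing of $G^{*}$ in the disk with the same combinatorial embedding as $\cA$. Identifying antipodal boundary points and using the original point set of $\cQ$ produces a polygonal monotone quasi-topological incidence structure $\cQ'$ in which every edge of the plane graph is a straight segment, so no bends occur at any finite crossing. Topological equivalence of $\cQ'$ and $\cQ$ is immediate from the agreement of the combinatorial embeddings, and monotonicity is retained because the cyclic order of pseudolines along the boundary is unchanged.

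\textbf{Main obstacle.} The delicate step is the treatment of the points at infinity: to genuinely avoid bends there, the two tail rays of each pseudoline must be collinear in the Euclidean sense, which is not guaranteed by F\'ary's theorem alone. I expect this to be the main technical point and would handle it by a postprocessing step in which, for each pseudoline $\ell_i$, the endpoints $p_i^{\pm}$ are slid along the boundary so that the extensions of the first and last segments of $\ell_i$ form a single Euclidean line through infinity; the $2$-connectedness of $G^{*}$ supplied by Lemma~\ref{lemma:digons} is what allows such adjustments to be carried out consistently while preserving the monotone cyclic order of endpoints and the combinatorial structure of the arrangement.
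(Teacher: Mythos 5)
Your necessity argument and your use of Lemma~\ref{lemma:digons} plus a straight-line (Chiba--Onoguchi--Nishizeki / F\'ary-type) redrawing for sufficiency match the paper's strategy. However, there is a genuine gap exactly at the point you yourself flag as the ``main obstacle'': the bends at infinity. Your proposed fix --- adjoin the boundary points $p_i^{\pm}$ as vertices of an augmented graph $G^{*}$, draw $G^{*}$ with straight edges, and then ``slide'' the $p_i^{\pm}$ along the boundary in a postprocessing step --- is not a proof. For the pseudoline $\ell_i$ to have no bend at its point at infinity, the tail segment leaving its first interior crossing $v$ and the tail segment leaving its last interior crossing $w$ must be opposite rays of one Euclidean line, which forces both tails to be parallel to the direction $w-v$; this is a rigid condition determined by the already-fixed positions of $v$ and $w$, not something achievable by independently sliding two boundary points, and you give no argument that the required directions are simultaneously realizable for all pseudolines while preserving the cyclic order of the $p_i^{\pm}$ on the boundary (which you need for monotonicity and for topological equivalence) and without creating new crossings among the tails.

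The paper resolves this by a different and cleaner device: it does \emph{not} put the boundary points into the graph. It draws only $G$ (the crossings and the finite arcs) with the outer face a convex polygon $P$, and then, for each pseudoline, joins its two vertices on $P$ by a straight Euclidean line and uses the two exterior rays of that line as the arc through infinity. Collinearity of the two tails is then automatic, convexity of $P$ guarantees that distinct chords meet only inside $P$ (where those portions are discarded) or at vertices of $P$, and the no-digon hypothesis (via Lemma~\ref{lemma:alllines} and Lemma~\ref{lemma:digons}) is what ensures each pseudoline meets $P$ in exactly two distinct vertices and no two pseudolines share both, so the chords are pairwise distinct and the boundary order is preserved. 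If you want to salvage your version, you should replace the sliding step by this chord construction; as written, the sufficiency direction is incomplete at its decisive step.
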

\begin{Proof}
Let $\cA$ be the underlying quasiline arrangement of $\cQ$.
We may assume that no crossings of $\cA$ are points at infinity.

If $\cA$ contains a digon, there has to be a bend. 

Conversely, suppose there are no digons in $\cA$. 
Define a graph $G=G(\cA)$ as follows. The vertices of $G$ are the vertices 
of $\cA$ and there is an edge for every pair 
of vertices $u$ and $v$ that are consecutive on an arc that has 
an empty intersection with the line at infinity of some pseudoline
(we assume that every pseudoline of  $\cA$ intersects the line
at infinity exactly once).
Oberve that this graph is embedded in the plane.

Since $\cA$ is a monotone quasiline arrangement,
graph $G$ can be viewed as the underlying graph of the digraph
$D({\cA}, x^+)$ for some choice of $x^+$ such that $({\cA}, x^+)$
is a proper marked arrangement.
Graph $G$ is simple and 2-connected  by Lemma  \ref{lemma:digons}. 
Therefore we can  draw  $G$ in the plane in such a way that the vertices on
the boundary of the outer face are the vertices of a convex polygon $P$,
all the edges of $G$ are straight lines and the cyclic orders of edges
around each vertex is preserved by  \cite{Chiba}. 
To obtain a polygonal quasiline arrangement topologically equivalent to $\cA$
we draw the part corresponding to $G$ with straight lines as above.
To extend it to the  whole arrangement  we have to add the arcs crossing
the line at infinity that were omitted. These arcs connect
pairs of boundary vertices of $P$; to each pseudoline of the arrangement 
there corresponds a pair of boundary vertices. It cannot happen that some 
line only has one common vertex with $P$, since in that case all the 
pseudolines would share a common vertex. We connect these pairs of boundary 
vertices with straight lines and omit the parts of them in the interior of $P$, 
to obtain the missing arcs. In that way we assure that there are no bends
at infinity. Note that any pair of  pseudolines can have at most one  
common vertex of $P$, since there are no digons in $\cA$. 
Therefore all the lines are distinct.
The order in which the pseudolines enter $P$ is the same as the order
in which they leave $P$ and this order is therefore reflected in the
order of the straight lines that represent them. That also means that
no two of these straight lines are parallel and they intersect only
in the interior (these parts are omitted) or in the vertices of $P$ 
since $P$ is convex. The polygonal quasiline arrangement that we obtained 
therefore has no bends. Since the orders of lines around each crossing is preserved,
it is homeomorphic to the original quasiline arrangement by  \cite[Theorem 3.3.1]{Mohar}.
\end{Proof}

A quasi-topological configuration obtained from the proof of 
Theorem \ref{thm:combinatorialQuasiline} can have many digons.
Is it possible to avoid digons? Is it any easier if we only consider
lineal incidence structures?

\begin{problem}
Is every lineal combinatorial incidence structure  realizable as a polygonal
quasi-topological incidence structure in the projective plane 
with no bends? % and all the unwanted crossings straight
\end{problem}

\begin{problem}
Is every  combinatorial incidence structure  realizable as a polygonal
quasi-topological incidence structure in the projective plane 
with no bends? % and all the unwanted crossings straight
\end{problem}

If all the unwanted crossings are straight, a polygonal quasi-topological 
incidence structure is determined by the coordinates of the
original vertices (and the directions at which the pseudolines approach infinity). 
Therefore also the following two problems are of interest.

\begin{problem}
Is every lineal combinatorial incidence structure realizable as a polygonal
quasi-topological incidence structure in the projective plane 
with no bends and all the unwanted crossings straight?
\end{problem}

\begin{problem}
Is every topological incidence structure topologically equivalent to 
a polygonal topological incidence structure in the projective plane
with no bends and all the unwanted crossings straight?
\end{problem}

%Let $R(\cC)$ be any representation  of a combinatorial configuration $\cC$
%with quasilines. Let $c(R)$ be the number of crossings of the representation $R$
%and let $b(R)$ be the total number of bends of the representation.
%Define the energy $E(R)$ to be
%$$
%E(R) =  K c(R) + b(R)
%$$
%where $K$ is some very large constant.
%Problem: determine $R$, such that $E(R)$ is minimal! This should give a
%representation with a small number of crossings and bends.

\section{Quasi-topological incidence structures as systems of curves on surfaces}

A \emph{curve arrangement} is a collection of simple closed curves on a 
given surface such that each pair of curves $(c_i, c_j )$, $i\ne j$ , 
has at most one point in common at which they cross transversely. 
In addition the arrangement should be cellular,
i.e., the complement of the curves is a union of open discs;
see J. Bokowski and T. Pisanski \cite{BokowskiPisanski}.
%The curve arrangement is called \emph{complete} when each pair of curves 
%intersects \cite{BokowskiPisanski}.

In this section we show that to any quasi-topological incidence
structure we can associate a map $M$ on a closed surface $S$ that can
be used to distinguish between mutation classes of quasi-topological configurations. 
Such a map defines an arrangement of curves on the surface $S$; 
i.e., every quasi-topological incidence structure can be viewed as a %complete 
curve arrangement on some surface. 
This is a generalization of  the work of  J. Bokowski and R. Strausz 
\cite{BokowskiStrausz}, where only topological  configurations were considered.
For the background on graphs and maps we refer the reader to 
B. Mohar \cite{Mohar} or J. L. Gross and T. W. Tucker \cite{GT}.

Let $\cQ$ be a quasi-topological incidence structure. % with the set of points $\cP$.
We define a graph $G=(V,E)$ corresponding to $\cQ$ in the following way.
The vertices of $V$ are the points of $\cQ$. Two vertices are connected
by an edge if the corresponding points of $\cQ$ are consecutive on some
pseudoline (the intersection points of pseudolines that are not
points of the incidence structure are ignored). Note that $G$ is Eulerian
since every pseudoline contributes  two edges through a vertex.
For each vertex $v$ we choose an orientation, which defines a
cyclic order $\pi_v$ of edges around $v$. The cyclic orders of all the vertices
form a \emph{rotation system} $\pi=\{\pi_v; v \in V\}$. Now define a signature mapping
$\lambda: E \to \{1,-1\}$ in the following way. For each edge $e=uv$ we 
check if the orientations at $u$ and $v$ agree if we move from $u$ to $v$
along $e$. If they agree we set $\lambda(e)=1$, otherwise $\lambda(e)=-1$. 
The pair $\Pi=(\pi,\lambda)$ is an \emph{embedding scheme} of $G$.
This defines a map, we denote it by $M=M(\cQ)$, on some surface $S$. 
This map is uniquely determined, up to homeomorphism; 
see B. Mohar \cite[Theorem 3.3.1]{Mohar}.
Note the surface $S$ is non-orientable. This can be seen as follows. 
Take a cycle $c$ of the map $M$ that corresponds to a pseudoline.
Since this is an orientation-reversing curve, starting at a vertex $v$ and
traveling along $c$, after returning to $v$  the orientation at $v$ is reversed.
That means that there must be an odd number of edges on $c$ with negative signature.
Consequently the embedding of $M$ is non-orientable by \cite[Lemma 4.1.4]{Mohar}.

A \emph{straight-ahead walk} or a SAW in an Eulerian map is a walk that always passes 
from an edge to the opposite edge in the rotation at the same vertex;
see Pisanski et al \cite{PisanskiTZ}.
%two edges are ``opposite'' at a vertex of valence $2d$ if they are $d$ edges apart in the 
%cyclic ordering {\em (rotation)\/} of the edges at that vertex induced by the embedding.
Since the pseudolines of $\cQ$ cross transversally at each crossing of $\cQ$,
every pseudoline corresponds to a straight-ahead walk in the map $M(\cQ)$.
We have shown the following.

\begin{theorem}
For any quasi-topological incidence structure  $\cQ $ 
with the set of points $\cP$ and the set of lines $\cL$  
there exists an Eulerian map $M=M(Q)$ on a closed surface, 
with skeleton $G = (V,E)$ such that $\cP = V$,  and each $SAW$ 
is a simple closed curve that corresponds to a line from $\cL$.
\end{theorem}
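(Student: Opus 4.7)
The plan is to construct the map $M=M(\cQ)$ explicitly from the data of $\cQ$ and then verify the three claimed properties (Eulerian, $\cP=V$, SAWs correspond to lines), essentially following the blueprint laid out in the paragraphs preceding the theorem.

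First I would define the combinatorial graph $G=(V,E)$: take $V:=\cP$, and for each pseudoline $\ell\in\cL$ join two points $p,q\in\cP$ lying on $\ell$ by an edge whenever $p$ and $q$ are consecutive points of $\cP$ along $\ell$ (ignoring all intersection points of pseudolines that are not in $\cP$). Because $\ell$ is a simple closed curve in the projective plane, the points of $\cP\cap\ell$ inherit a cyclic order, and the edges arising from $\ell$ form a closed walk that contributes exactly two edge-ends at each of its vertices. Summing over lines gives even degree at every vertex, so $G$ is Eulerian. This takes care of $\cP=V$ and the Eulerian property in one go.

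Next I would build the embedding scheme $\Pi=(\pi,\lambda)$. At each vertex $v\in V$, pick a small disk neighbourhood in the projective plane; since the projective plane is locally orientable, this disk has two choices of orientation, and choosing one induces a cyclic order $\pi_v$ on the edge-ends at $v$ (which are small arcs of the pseudolines through $v$, split at $v$). For each edge $e=uv$ one then compares the local orientations at $u$ and $v$ transported along $e$, setting $\lambda(e)=+1$ if they agree and $-1$ otherwise. By the standard theory (Mohar–Thomassen, \cite[Theorem 3.3.1]{Mohar}), the pair $\Pi=(\pi,\lambda)$ determines a unique 2-cell embedding of $G$ into a closed surface $S$, up to homeomorphism; this is our map $M$.

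Finally I would identify the straight-ahead walks of $M$ with the pseudolines of $\cQ$. The key observation is that, by the definition of a quasiline arrangement, any two pseudolines meeting at a common vertex $v$ cross transversally there. Consequently, if an edge $e=uv$ belongs to the pseudoline $\ell$, then the other edge-end at $v$ coming from $\ell$ is positioned diametrically opposite to $e$ in the cyclic order $\pi_v$—that is, it is the edge opposite $e$ in the rotation, which is precisely the condition defining a straight-ahead walk in the sense of \cite{PisanskiTZ}. Closing up the walk as we move through successive vertices of $\cP\cap\ell$ recovers $\ell$ as a simple closed curve in $M$, and conversely every SAW in $M$ is obtained this way. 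The main subtlety—not a real obstacle but the step that requires care—is checking that the local ``opposite in rotation'' condition survives the global comparison of orientations encoded by $\lambda$, so that a SAW traced in the abstract embedding really does agree with the pseudoline traced in the projective plane; this is exactly where transversality plus the signature convention pay off. The non-orientability statement in the preamble is then a bonus observation, following from \cite[Lemma 4.1.4]{Mohar} once one notes that a pseudoline is orientation-reversing in the projective plane.
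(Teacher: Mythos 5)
Your construction is exactly the one the paper uses: the graph $G$ on $\cP$ with edges between consecutive configuration points on a pseudoline, the Eulerian property from each pseudoline contributing two edge-ends per vertex, the embedding scheme $(\pi,\lambda)$ from local orientations and their comparison along edges, and the identification of pseudolines with straight-ahead walks via transversality of crossings. The proposal is correct and takes essentially the same approach as the paper.
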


%Given an Eulerian map it is easy to check whether the map comes 
%from a quasiline arrangement. We only have to follow  the straight-ahead walks.
%If any two SAWs meet at most once and no SAW crosses itself, 
%then the map is a result of a quasiline arrangement.
%%\textbf{(Is this true, can this be also generalized quasiline arrangement).}
%If additionally any two SAWs meet at exactly one point and the surface 
%is the projective plane, then the map is a result of a pseudoline arrangement. 
The maps corresponding to quasi-topological incidence structures 
can be used to distinguish between quasi-topological
incidence structures that are not mutation equivalent.

\begin{theorem}
If two quasi-topological incidence structures  ${\cQ}_1$ and ${\cQ}_2$
are mutation equivalent, then $M({\cQ}_1)=M({\cQ}_2)$.
\end{theorem}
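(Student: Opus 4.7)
The plan is to reduce the statement to three local claims, combined by induction on the length of the mutation sequence. Mutation equivalence is defined via a sequence of admissible mutations followed by a topological equivalence at each end, so it suffices to show that (i) topologically equivalent quasi-topological incidence structures have the same map $M$, (ii) a single admissible Reidemeister mutation across a non-point vertex preserves $M$, and (iii) a single admissible digon creation or removal preserves $M$. Claim (i) is immediate from the construction: a homeomorphism of the projective plane that carries $\cQ_1$ to $\cQ_2$ takes points to points, arcs of pseudolines to arcs of pseudolines, and respects transversality, hence induces an isomorphism of skeleton graphs together with their rotation systems and edge signatures. By Mohar's theorem \cite[Theorem 3.3.1]{Mohar} the two embedding schemes describe the same map.

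For claims (ii) and (iii) let $\cQ$ and $\cQ'$ denote the quasi-topological incidence structures before and after the mutation, and let $M = (G,\pi,\lambda)$ and $M' = (G',\pi',\lambda')$ be the associated maps. By definition of admissibility, every crossing created, destroyed, or moved during the mutation is \emph{not} a point of the incidence structure. Hence the vertex sets agree: $V(G) = \cP = V(G')$. The edges of $G$ record pairs of points of $\cP$ that are consecutive along some pseudoline, where intermediate non-point crossings are ignored. Both admissible mutation types take place inside a small open disk $D$ containing no point of $\cP$, so the cyclic order in which points of $\cP$ appear along each pseudoline is unchanged, and therefore $E(G) = E(G')$.

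It remains to verify that the rotation system $\pi$ and the signature $\lambda$ are unchanged. For each vertex $v \in \cP$, the local picture of $\cQ$ in a small neighbourhood of $v$ is untouched by the mutation (since $v \notin D$), so the chosen orientation at $v$ and the cyclic order of incident edge germs are identical in $\cQ$ and $\cQ'$; thus $\pi_v = \pi'_v$. For an edge $e = uv$ of $G$, the signature $\lambda(e)$ is determined by comparing the orientations chosen at $u$ and $v$ via transport along the arc of a pseudoline representing $e$. The mutation alters this arc only inside the disk $D$, which is simply connected and disjoint from $\{u,v\}$; hence the old and new arcs are homotopic rel.~endpoints through arcs avoiding points of $\cP$. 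Parallel transport of a local frame along homotopic arcs in an orientable neighbourhood produces the same outcome, so $\lambda(e) = \lambda'(e)$ for every edge. Thus $(G,\pi,\lambda) = (G',\pi',\lambda')$, and invoking \cite[Theorem 3.3.1]{Mohar} once more gives $M(\cQ) = M(\cQ')$.

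The only delicate step is the invariance of $\lambda$; everything else is essentially bookkeeping about the vertex and edge sets. The reason the signature cannot flip is structural: an admissible mutation never meets a point of the incidence structure, so each of the finitely many re-routed arcs is deformed inside a topological disk of the projective plane, and such a disk carries a consistent orientation that determines the signature identically before and after. Putting (i), (ii), and (iii) together and iterating along the mutation sequence yields $M(\cQ_1) = M(\cQ_2)$, as required. $\Box$
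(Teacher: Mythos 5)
Your proof is correct and follows essentially the same approach as the paper: admissible mutations are local moves confined to regions containing no point of the incidence structure, so the skeleton, rotation system, and signature---hence the embedding scheme---are unchanged. Your treatment is more detailed than the paper's (which disposes of the signature with one sentence about choosing local rotations consistently), and your homotopy argument for the invariance of $\lambda$ is a worthwhile elaboration of a step the paper leaves implicit.
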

\begin{Proof}
Admissible mutations do not change the cyclic order of the pseudolines around 
any crossing that is  a point of the incidence structure.
On the other hand,  the cyclic orders of the pseudolines around every vertex 
defines the rotation systems for maps $M({\cQ}_1)$ and $M({\cQ}_2)$.
Moreover, if we choose local rotations consistently, also the embedding
schemes are equal.
Two maps with equal embedding schemes are considered to be the same.
\end{Proof}

\begin{example}
Consider quasi-topological configurations from Figure \ref{fig:fano}.
The corresponding maps have 7 vertices and 21 edges. The map corresponding to
the quasi-topological  configuration on the left has six faces of length three and two faces
of length 5, so it has Euler characteristic -5 and thus it has nonorientable
genus equal to 7. 
The map corresponding to quasi-topological configuration on the right 
has seven faces  of length five and one face of lenght seven, so it has 
Euler characteristic -6  and thus it has nonorientable genus equal to 8.
Therefore the two quasi-topological configurations are not mutation
equivalent.
\end{example}

For a given quasi-topological incidence structure  $\cQ $
the map $M(Q)$ can be viewed as a curve arrangement on some
nonorientable surface. Since every combinatorial incidence
structure can be realized as a quasi-topological incidence structure,
we can define  the (\emph{non})\emph{orientable genus of the incidence structure} 
$\cC$ as the smallest $g = g(\cC)$ for which there exists
a (non)orientable surface of genus $g$ on which $\cC$ can be represented with
a curve arrangement. 

\begin{problem}
For a given combinatorial incidence structure determine its
$($non$)$ orientable genus.
\end{problem}

%\textbf{To do: tell how to obain the faces by the face-tracing algorithm}.
%The corresponding quasiline arrangement divides the projective plane 
%into 2-cells, 1-cells and 0-cells.
%It defines an EULERIAN map on the projective plane.

\section*{Acknowledgements}

%The authors would like to thank both referees for reading
%the manuscript carefully and for their detailed comments which
%helped improve the presentation of the paper.

The authors would like to thank Sa\v so Strle for the fruitfull discussions
regarding the topological aspects of the paper.
This work was supported in part by
`Agencija za raziskovalno dejavnost Republike Slovenije', 
Grants  P1--0294, L1--4292 
and by European Science Foundation, Eurocores Eurogiga - GReGAS, N1--0011.

\noindent
{\sc J\"urgen Bokowski}\\
Department of Mathematics, Technische Universit\"at Darmstadt,\\
Schlossgartenstrasse 7, D--64289 Darmstadt, Germany\\
\texttt{juergen.bokowski@gmail.com}\\
%URL: http://wwwopt.mathematik.tu-darmstadt.de/~bokowski/

\noindent
{\sc Jurij Kovi\v{c}},   \\
IMFM, University of Ljubljana, Jadranska 19, 1000 Ljubljana, Slovenia,\\
and\\
Andrej Maru\v si\v c Institute, University of Primorska, \\
Muzejski trg 2, 6000 Koper, Slovenia\\
\texttt{jurij.kovic@siol.net}\\

\noindent
{\sc Toma\v z Pisanski,} \\ 
Faculty for Mathematics and Physics, University of Ljubljana, \\
Jadranska 19, 1000 Ljubljana, Slovenia,\\ 
IMFM, University of Ljubljana, Jadranska 19, 1000 Ljubljana, Slovenia,\\
and\\
Andrej Maru\v si\v c Institute, University of Primorska, \\
Muzejski trg 2, 6000 Koper, Slovenia\\
\texttt{tomaz.pisanski@fmf.uni-lj.si}\\

\noindent
{\sc Arjana \v Zitnik}\\ 
Faculty for Mathematics and Physics, University of Ljubljana,\\
Jadranska 19, 1000 Ljubljana, Slovenia,\\ 
and\\
IMFM, University of Ljubljana, Jadranska 19, 1000 Ljubljana, Slovenia\\
\texttt{arjana.zitnik@fmf.uni-lj.si}\\ 

\end{document}